\documentclass[12pt, reqno]{amsart}
\usepackage{amssymb}
\usepackage{eucal}
\usepackage{amsmath}
\usepackage{amscd}
\usepackage[dvips]{color}
\usepackage{mathtools}
\usepackage{multicol}
\usepackage[all]{xy}           
\usepackage{graphicx}
\usepackage{color}
\usepackage{colordvi}
\usepackage{xspace}
\usepackage{ulem}
\usepackage{cancel}
\usepackage{tikz}
\usepackage{longtable}
\usepackage{multirow}
\usepackage{ifpdf}
\ifpdf
 \usepackage[colorlinks,final,backref=page,hyperindex]{hyperref}
\else
 \usepackage[colorlinks,final,backref=page,hyperindex,hypertex]{hyperref}
\fi




\topmargin -.8cm \textheight 22.8cm \oddsidemargin 0cm \evensidemargin -0cm \textwidth 16.3cm

\begin{document}
\newcommand {\emptycomment}[1]{} 

\newcommand{\tabincell}[2]{\begin{tabular}{@{}#1@{}}#2\end{tabular}}

\newcommand{\nc}{\newcommand}
\newcommand{\delete}[1]{}

\nc{\mlabel}[1]{\label{#1}}  
\nc{\mcite}[1]{\cite{#1}}  
\nc{\mref}[1]{\ref{#1}}  
\nc{\meqref}[1]{Eq.~\eqref{#1}} 
\nc{\mbibitem}[1]{\bibitem{#1}} 

\delete{
\nc{\mlabel}[1]{\label{#1}  
{\hfill \hspace{1cm}{\bf{{\ }\hfill(#1)}}}}
\nc{\mcite}[1]{\cite{#1}{{\bf{{\ }(#1)}}}}  
\nc{\mref}[1]{\ref{#1}{{\bf{{\ }(#1)}}}}  
\nc{\meqref}[1]{Eq.~\eqref{#1}{{\bf{{\ }(#1)}}}} 
\nc{\mbibitem}[1]{\bibitem[\bf #1]{#1}} 
}

\newtheorem{thm}{Theorem}[section]
\newtheorem{lem}[thm]{Lemma}
\newtheorem{cor}[thm]{Corollary}
\newtheorem{pro}[thm]{Proposition}
\newtheorem{conj}[thm]{Conjecture}
\theoremstyle{definition}
\newtheorem{defi}[thm]{Definition}
\newtheorem{ex}[thm]{Example}
\newtheorem{rmk}[thm]{Remark}
\newtheorem{pdef}[thm]{Proposition-Definition}
\newtheorem{condition}[thm]{Condition}

\renewcommand{\labelenumi}{{\rm(\alph{enumi})}}
\renewcommand{\theenumi}{\alph{enumi}}
\renewcommand{\labelenumii}{{\rm(\roman{enumii})}}
\renewcommand{\theenumii}{\roman{enumii}}

\nc{\tred}[1]{\textcolor{red}{#1}}
\nc{\tblue}[1]{\textcolor{blue}{#1}}
\nc{\tgreen}[1]{\textcolor{green}{#1}}
\nc{\tpurple}[1]{\textcolor{purple}{#1}}
\nc{\btred}[1]{\textcolor{red}{\bf #1}}
\nc{\btblue}[1]{\textcolor{blue}{\bf #1}}
\nc{\btgreen}[1]{\textcolor{green}{\bf #1}}
\nc{\btpurple}[1]{\textcolor{purple}{\bf #1}}


\newcommand{\End}{\text{End}}

\nc{\calb}{\mathcal{B}}
\nc{\call}{\mathcal{L}}
\nc{\calo}{\mathcal{O}}
\nc{\frakg}{\mathfrak{g}}
\nc{\frakh}{\mathfrak{h}}
\nc{\ad}{\mathrm{ad}}
\def \gl {\mathfrak{gl}}
\def \g {\mathfrak{g}}

\nc{\ccred}[1]{\tred{\textcircled{#1}}}


\newcommand{\cm}[1]{\textcolor{purple}{\underline{CM:}#1 }}

\newcommand\blfootnote[1]{%
  \begingroup
  \renewcommand\thefootnote{}\footnote{#1}%
  \addtocounter{footnote}{-1}%
  \endgroup
}


\title[Manin triples and bialgebras of Left-Alia algebras associated to invariant theory]
{Manin triples and bialgebras of Left-Alia algebras associated to invariant theory}
\author{Chuangchuang Kang}
\address{
School of Mathematical Sciences  \\
Zhejiang Normal University\\
Jinhua 321004 \\
China}
\email{kangcc@zjnu.edu.cn}

\author{Guilai Liu}
\address{Chern Institute of Mathematics \& LPMC    \\
Nankai University \\
 Tianjin 300071   \\
  China}
\email{1120190007@mail.nankai.edu.cn}

\author{Zhuo Wang}
\address{
School of Mathematical Sciences \& LPMC    \\
Nankai University \\
Tianjin 300317              \\
China}
\email{2013270@nankai.edu.cn}

\author{Shizhuo Yu}
\address{
School of Mathematical Sciences \& LPMC    \\
Nankai University \\
Tianjin 300317              \\
China}
\email{yusz@nankai.edu.cn}

\blfootnote{*Corresponding Author: Guilai Liu. Email: 1120190007@mail.nankai.edu.cn.}

\begin{abstract}
A left-Alia algebra is a vector space together with a bilinear map satisfying symmetric Jocobi identity.   Motivated by invariant theory, we first construct a class of left-Alia algebras induced by twisted derivations. Then, we introduce the notion of Manin triples and  bialgebras of left-Alia algebras. Via specific matched pairs of left-Alia algebras, we figure out the equivalence between Manin triples and  bialgebras of left-Alia algebras.
\end{abstract}

\subjclass[2010]{
    17A36,  
    17A40,  
    17B10, 
    17B40, 
    17B60, 
    17B63,  
    17D25.  
}

\keywords{Left-Alia algebras, bialgebras, invariant theory}

\maketitle


\tableofcontents

\allowdisplaybreaks

\section{Introduction and Main Statements}
\subsection{Introduction}
Let $G$ be a finite group and  $\mathbb {K}$ an algebraic closed field  of characteristic zero.
Suppose that $V$ is an $n$-dimensional faithful representation of $G$ and
$S=\mathbb {K}[V]=\mathbb {K}[x_1,\ldots,x_n]$
is the coordinate ring of $V$.

The goal of invariant theory is to study the structures of the ring of invariants
\[S^{G}=\{f\in S: a\cdot f=f,\ \forall a\in G\},\]
in which the group action is extended from the representation of $G$ (see Section~\ref{Pre} for more details).
In particular, Hilbert proved that $S^G$ is always a finite generated $\mathbb {K}$-algebra~\cite{Hil} and Chevalley~\cite{Che}, Shephard and Todd~\cite{ST} proved that $S^G$ is a polynomial algebra if and only if~$G$ is generated by pseudo-reflections (see Section~\ref{twisted} for precise definition).

Twisted derivations ~\cite{sigma} (also named $\sigma$-derivations) play an important role in the study of deformations of Lie algebras. Motivated by the above Chevalley's Theorem, we apply pseudo-reflections to induce a class of twisted derivations on $S$ (see Theorem~\ref{main12} for more details). Based on twisted derivations on commutative associative algebras, we obtain a class of left-Alia (left anti-Lie-admissible) algebras~\cite{Dzh09}, which appears in the study of a special class of algebras with a skew-symmetric identity of degree three. Furthermore, we construct Manin triples and bialgebras of left-Alia  algebras. Via specific matched pairs of left-Alia algebras, we figure out the equivalence between Manin triples and bialgebras.

Throughout this paper, unless otherwise specified, all vector spaces are finite-dimensional over an algebraically closed field $\mathbb{K}$ of characteristic zero and all $\mathbb{K}$ -algebras are  commutative and associative with the finite Krull dimension, although many results and notions remain valid in the infinite-dimensional case.

\subsection{Left-Alia Algebras Associated with Invariant Theory}
The notion of  a left-Alia algebra  was defined for the first time in the table in the ~{Introduction} 
 of~\cite{Dzh09}. 
\begin{defi}[\cite{Dzh09}]
	A \textbf{left-Alia algebra} (also named a 0-Alia algebra) is a vector space $A$ together with a bilinear map $[\cdot,\cdot]:A\otimes A\rightarrow A$ satisfying the \textbf{symmetric Jacobi identity}: 
	\begin{equation}\label{eq:0-Alia}
		[[x, y], z]+[[y, z], x]+[[z, x], y]=[[y, x], z]+[[z, y], x]+[[x, z], y],\;\forall x,y,z\in A.
	\end{equation}
\end{defi}
There are some typical examples of left-Alia algebras. Firstly, when the bilinear map~$[\cdot,\cdot]$ is skew-symmetric, $(A,[\cdot,\cdot])$ is a Lie algebra. By contrast, any commutative algebra is a left-Alia algebra and, in particular,  a mock-Lie algebra~\cite{Zusmanovich} (also known as a Jacobi--Jordan algebra in~\cite{Burde}) with a symmetric bilinear map that satisfies the Jacobi identity is a   left-Alia algebra.
Secondly, the notion of an anti-pre-Lie algebra ~\cite{LB2022}  was recently studied as a left-Alia algebra with an additional condition. Anti-pre-Lie algebras are the underlying algebra structures of nondegenerate commutative $2$-cocycles~\cite{Dzh} on Lie algebras and are characterized as Lie-admissible algebras whose negative multiplication operators compose representations of commutator Lie algebras. Condition \eqref{eq:0-Alia} of the identities of an anti-pre-Lie algebra is just to guarantee $(A,[\cdot,\cdot])$ is a Lie-admissible algebra. Additionally, we also studied left-Alia algebras in terms of their relationships with Leibniz algebras~\cite{Loday} and Lie triple systems ~\cite{Helgason}.

Let $(A,\cdot)$ be a commutative associative algebra and $R:A\rightarrow A$ a linear map on $A$. For brevity, the operation $\cdot$ will be omitted. A linear map $D:A\rightarrow A$ is called \textbf{~{a twisted derivation}} 
with respect to an $R$ (also named a $\sigma$-derivation in~\cite{sigma}) if $D$ satisfies the twisted Leibniz rule:
\begin{equation}\label{eq:tw}
	D(fg)=D(f)g+R(f)D(g),\ f,g\in A.
\end{equation}

Non-trivial examples of twisted derivations can be constructed in invariant theory. In particular, each pseudo-reflection $R$ on a vector space $V$ induces a twisted derivation $D_R$~on the polynomial ring $\mathbb{K}[V]$ (see Section~\ref{twisted} for details).

Define
\[[f,g]_R=D(f)g-R(f)D(g).\]

We then obtain a class of left-Alia algebras in ~{Theorem A.}\\

\textbf{Theorem A} (Theorem \ref{main12} and Theorem \ref{thm:R-D}) \\
(a) {\it For each twisted derivation $D$ on $A$,  $(A,[\cdot,\cdot]_R)$ is a left-Alia algebra.}\\
(b) {\it Each pseodo-reflection $R$ on $V$ induces a left-Alia algebra $(\mathbb{K}[V],[\cdot,\cdot]_R)$.}\\

This applies when $R=I$, $[f,g]_R$ is skew-symmetric and $(A,[\cdot,\cdot]_R)$ is a Lie algebra of the Witt type~\cite{WT}. 
Moreover, ~{Theorem A} also provides a class of left-Alia algebras on polynomial rings from invariant theory. As a corollary of ~{Theorem A,} we see that when $S^G$ is a polynomial algebra, each generator $g\in G$ corresponds to a left-Alia algebra $(S,[\cdot,\cdot]_{R_g})$. The collection of left-Alia algebras is also an interesting research object for further study.

In addition, if we define that $[f,h]=D_R(f)h-fD_R(h)$ on $S$, $(S,[\cdot,\cdot])$, then it is not a left-Alia algebra in general. 
 However, when  $[\cdot,\cdot]$ is restricted to $V^*$, we obtain a finite-dimensional Lie algebra, which induces a linear Poisson structure on $V$, and figure out the entrance to the study of twisted relative Poisson structures 
 on graded algebras. See~\cite{KYZ} for reference.

\subsection{Manin Triples and Bialgebras of Left-Alia Algebras }
A  bialgebra structure is a vector space equipped with both an
algebra structure and a coalgebra structure satisfying certain
compatible conditions. Some well-known examples of such structures
include Lie bialgebras~\cite{Cha,Dri},  which are closely
related to Poisson--Lie groups and play an important role in the infinitesimalization of quantum
groups, and antisymmetric infinitesimal bialgebras~\cite{Agu2000, Agu2001,
	Agu2004,Bai2010,SW} as equivalent structures of double constructions of Frobenius algebras which are widely
applied in the 2d topological field and string theory~\cite{Kock,Lau}.
Recently, the notion of anti-pre-Lie bialgebras was \linebreak  studied in~\cite{LB2023}, which serves as a preliminary to supply a reasonable bialgebra theory for transposed Poisson algebras~\cite{Bai20232}.
The notions of  mock-Lie bialgebras~\cite{Benali} and Leibniz bialgebras~\cite{RSH,ShengT} were also introduced with different motivations.
These bialgebras have a common property in that they can be equivalently characterized by Manin triples which correspond to nondegenerate invariant bilinear forms on the algebra structures.
In this paper, we follow such a procedure to study left-Alia bialgebras.

To develop the bialgebra theory of left-Alia algebras, we first define a representation of a left-Alia algebra
to be a triple $(l,r,V)$, where $V$ is a vector space and $l,r:A\rightarrow\mathrm{End}(V)$ are linear maps such that the following equation holds:
\begin{equation*}
	l([x,y])v-l([y, x])v= r(x)r(y)v-r(y)r(x)v+r(y)l(x)v-r(x)l(y)v,\;\forall x,y\in A, v\in V.
\end{equation*}

A representation $(\rho,V)$ of a Lie algebra $(A,[\cdot,\cdot])$ renders representations $(\rho,-\rho,V)$ and  $(\rho,2\rho,V)$ of $(A,[\cdot,\cdot])$ as left-Alia algebras.

Furthermore, we introduce the notion of a quadratic left-Alia algebra, defined as a left-Alia algebra $(A,[\cdot,\cdot])$ equipped with a nondegenerate symmetric bilinear form~$\mathcal{B}$~which is invariant in the sense that
\begin{equation*}
	\mathcal{B}([x, y],z)=\mathcal{B}(x,[z, y]-[y, z]),\;\forall x,y,z\in A.
\end{equation*}

A quadratic left-Alia algebra gives rise to the equivalence between the adjoint representation and the coadjoint representation.

Last, we introduce the notions of a matched pair  (Definition \ref{match}) of left-Alia algebras, a Manin triple of left-Alia algebras ~{(Definition}~\ref{Manin})  
and a~left-Alia bialgebra (Definition \ref{bialgebra}).  Via specific matched pairs of left-Alia algebras, we figure out the equivalence between Manin triples and bialgebras in ~{Theorem B.}\\

\textbf{Theorem B} (Theorem \ref{thm:1} and Theorem \ref{thm:2})
 {\it Let $(A,[\cdot,\cdot] )$ be a left-Alia algebra. Suppose that there is a left-Alia algebra structure $(A^{*},\{\cdot,\cdot\} )$ on the dual space $A^{*}$, and $\delta:A\rightarrow A\otimes A$ is the dual of $\{\cdot,\cdot\}$. Then the following conditions are equivalent:
 \begin{enumerate}
     \item There is a Manin triple of left-Alia algebras $\big( (A\oplus A^{*},[\cdot,\cdot]_{d},\mathcal{B}_{d}), A, A^{*} \big)$,
        where,
 \begin{equation*}
[(x,u),(y,v)]_d=([x, y],l(x)v+r(y)u),\;\forall x,y\in A, u,v\in A^*,\ \ \ and
\end{equation*}
 \begin{equation*}
    \mathcal{B}_{d}((x,u),(y,v))=\langle x, v\rangle+\langle y,u\rangle,\;\forall x,y\in A, u,v\in A^{*}.
\end{equation*}

     \item $(A,[\cdot,\cdot] ,\delta)$ is a left-Alia bialgebra.
 \end{enumerate}
 }
\vspace{5mm}
Theorem B naturally leads to the study of Yang--Baxter equations and relative Rota--Baxter operators for left-Alia algebras~\cite{YBRB}.

\section{Pseudo-Reflections and Twisted Deviations in Invariant Theory}
\subsection{Preliminary on Invariant Theory}\label{Pre}
Let $G$ be a finite group and  $\mathbb {K}$ an algebraic closed field  of characteristic zero.
Suppose that $(\rho,V)$ is an $n$-dimensional faithful representation of $G$ and its dual representation is denoted by $(\rho,V^*)$.
Let $S=\mathbb {K}[V]=\mathbb {K}[x_1,\ldots,x_n]$ be the coordinate ring of $V$. Define a $G$-action on $S$ as
\begin{equation}\label{eq:action}
	g\cdot \sum_{i_1,\ldots,i_n} k_{i_1,\ldots,i_n} x_1^{i_1}\ldots x_n^{i_n}:=\sum_{i_1,\ldots,i_n} k_{i_1,\ldots,i_n}(\rho (g)x_1)^{i_1}\ldots (\rho (g)x_n)^{i_n},\;\forall g\in G.
\end{equation}

Define the ring of invariants as
\[S^{G}=\{f\in S: a\cdot f=f,\ \forall a\in G\}.\]
\begin{thm}[\cite{IT,Hil}]
	\begin{enumerate}
		\item[(a)]
		$S^{G}$ is a finitely generated $\mathbb {K}$-algebra.
		\item[(b)]
		$S$ is a finitely generated $S^G$-module.
	\end{enumerate}
\end{thm}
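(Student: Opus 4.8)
The plan is to establish part (b) first by an integrality argument and then deduce part (a) from it, so that the bulk of the work is isolated in a single clean step.

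For (b), I would show that each coordinate function $x_i$ is integral over $S^G$. The key observation is that the orbit polynomial
\[
p_i(T)=\prod_{g\in G}\bigl(T-g\cdot x_i\bigr)\in S[T]
\]
is monic in $T$ of degree $|G|$, and its coefficients are, up to sign, the elementary symmetric functions of the finite set $\{g\cdot x_i : g\in G\}$. Since $G$ permutes this set via \meqref{eq:action}, each coefficient is fixed by $G$ and hence lies in $S^G$; thus $x_i$ is a root of a monic polynomial over $S^G$, i.e. $x_i$ is integral over $S^G$. As $S=\mathbb{K}[x_1,\ldots,x_n]$ is generated as a $\mathbb{K}$-algebra, hence as an $S^G$-algebra, by the integral elements $x_1,\ldots,x_n$, the whole ring $S$ is integral over $S^G$. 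A ring that is both finitely generated as an algebra and integral over a subring is finitely generated as a module over that subring; therefore $S$ is a finitely generated $S^G$-module, which is (b).

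For (a), I would invoke the Artin--Tate lemma applied to the tower $\mathbb{K}\subseteq S^G\subseteq S$. Here $\mathbb{K}$ is Noetherian, $S$ is a finitely generated $\mathbb{K}$-algebra generated by $x_1,\ldots,x_n$, and $S$ is a finitely generated $S^G$-module by part (b). The Artin--Tate lemma then yields that the intermediate ring $S^G$ is a finitely generated $\mathbb{K}$-algebra, which is precisely (a).

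Alternatively, and more in the spirit of the invariant-theoretic setting of this paper, (a) admits a direct proof using the Reynolds operator $\pi\colon S\to S^G$, $\pi(f)=\tfrac{1}{|G|}\sum_{g\in G}g\cdot f$, which is well defined because $\mathrm{char}\,\mathbb{K}=0$. This $\pi$ is a projection onto $S^G$ and an $S^G$-module map. The grading on $S=\bigoplus_{d\geq 0}S_d$ restricts to a grading on $S^G$; let $I$ be the ideal of $S$ generated by the positive-degree homogeneous invariants. By Hilbert's basis theorem $S$ is Noetherian, so $I$ is generated by finitely many homogeneous invariants $f_1,\ldots,f_m$, and a degree induction shows these generate $S^G$ as a $\mathbb{K}$-algebra: given a homogeneous invariant $f$ of positive degree, write $f=\sum_i h_i f_i$ with $h_i\in S$ of strictly smaller degree, and apply $\pi$ to obtain $f=\pi(f)=\sum_i \pi(h_i)f_i$ with $\pi(h_i)\in S^G$ of smaller degree, which are polynomials in $f_1,\ldots,f_m$ by the inductive hypothesis. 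The only real subtlety in either route lies in this averaging step, which is where the characteristic-zero hypothesis enters; the integrality proof of (b) together with Artin--Tate sidesteps it entirely and works over any field, so I would present that as the main argument.
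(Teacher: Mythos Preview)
Your argument is correct and is essentially the classical proof of this result: the orbit-polynomial argument gives integrality of $S$ over $S^G$, hence module-finiteness, and then Artin--Tate yields finite generation of $S^G$ over $\mathbb{K}$. Your alternative Reynolds-operator proof of (a) is likewise the standard Hilbert--Noether argument in characteristic zero.

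However, there is nothing in the paper to compare your proof against: the paper does not prove this theorem at all. It is stated as a classical result with citations \cite{IT,Hil} (Benson's \emph{Polynomial Invariants of Finite Groups} and Hilbert's original paper) and is used purely as background for the invariant-theoretic motivation in Section~\ref{Pre}. So while your write-up is a perfectly good proof of the cited statement, the paper's own ``proof'' is simply the citation, and no comparison of approaches is possible.
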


\begin{defi}[\cite{IT}]
	A linear automorphism $R\in Aut(V)$ is called a \textbf{{pseudo-reflection}} if $R^m=I$ for some $m\in \mathbb{N}^*$ and $Im(I-R)$ is one-dimensional.
\end{defi}
In invariant theory, the following theorem gives the equivalent condition that $S^G$ is a polynomial algebra:
\begin{thm}[\cite{Che,ST}]
	$S^G$ is a polynomial algebra if and only if $G\cong \rho(G)$ is generated by \linebreak  pseudo-reflections.
\end{thm}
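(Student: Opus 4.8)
This is the Chevalley--Shephard--Todd theorem, and I would prove the two implications by genuinely different methods. Write $P(M,t)=\sum_{k\ge 0}(\dim_{\mathbb K}M_k)t^k$ for the Poincaré series of a graded space $M$, and use Molien's formula
\[
P(S^G,t)=\frac{1}{|G|}\sum_{g\in G}\frac{1}{\det\bigl(I-t\rho(g)\bigr)}.
\]
The ``if'' direction is Chevalley's construction of explicit polynomial generators, while the ``only if'' direction is a counting argument comparing the two leading Laurent coefficients of this sum at $t=1$.

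First I would treat ``$G$ generated by pseudo-reflections $\Rightarrow S^G$ polynomial''. Let $\mathfrak h=S^G_+\,S$ be the Hilbert ideal of $S$ generated by the invariants of positive degree, and pick homogeneous $f_1,\dots,f_m\in S^G_+$ that minimally generate $\mathfrak h$. The crux is the Chevalley relations lemma: if $\sum_i p_i f_i=0$ with $p_i\in S$ homogeneous, then every $p_i$ lies in $\mathfrak h$. Its proof rests on the observation that for a pseudo-reflection $R$ fixing a hyperplane $\{\ell=0\}$ and any $h\in S$, the difference $R\cdot h-h$ vanishes on that hyperplane and is therefore divisible by $\ell$; applying $R$ to a relation, subtracting, dividing by $\ell$, and inducting on degree (then bootstrapping from a single reflection to all of $G$, which the reflections generate) yields the claim. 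From this, algebraic independence of $f_1,\dots,f_m$ follows: a minimal polynomial relation $\varphi(f_1,\dots,f_m)=0$, differentiated by the chain rule, would force the partials $(\partial_j\varphi)(f)$ into $\mathfrak h$ and contradict minimality of the relation. Since $S/\mathfrak h$ is finite-dimensional, $S$ is finite over $\mathbb K[f_1,\dots,f_m]$, so $m\ge n$ by Krull dimension, while independence gives $m\le n$; hence $m=n$, and a degree induction upgrades ``$f_1,\dots,f_n$ generate $\mathfrak h$'' to $S^G=\mathbb K[f_1,\dots,f_n]$.

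For the converse, assume $S^G=\mathbb K[f_1,\dots,f_n]$ with $\deg f_i=d_i$. The identity contributes a pole of order $n$ to Molien's sum at $t=1$, each pseudo-reflection (fixed space of dimension $n-1$, one nontrivial eigenvalue $\lambda$) a pole of order $n-1$, and every other element a pole of order at most $n-2$; matching the two top Laurent coefficients against $\prod_i(1-t^{d_i})^{-1}$ gives $|G|=\prod_i d_i$ and, using $(1-\lambda)^{-1}+(1-\lambda^{-1})^{-1}=1$ to pair $R$ with $R^{-1}$, the reflection count $r=\sum_i(d_i-1)$. Now let $H\trianglelefteq G$ be generated by all pseudo-reflections of $G$. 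By the direction just proved, $S^H=\mathbb K[h_1,\dots,h_n]$ with $\deg h_i=e_i$, and since $H$ has exactly the same pseudo-reflections we likewise get $|H|=\prod_i e_i$ and $r=\sum_i(e_i-1)$, whence $\sum_i d_i=\sum_i e_i$. Finally $S^G\subseteq S^H$ with $S^H$ a finite graded module over the polynomial ring $S^G$; being Cohen--Macaulay of the same dimension, $S^H$ is $S^G$-free, of rank $[\,\mathrm{Frac}(S^H):\mathrm{Frac}(S^G)\,]=[G:H]$. Hence $Q(t):=P(S^H,t)/P(S^G,t)=\prod_i(1-t^{d_i})/(1-t^{e_i})$ is an honest polynomial with $Q(0)=1$ and $Q(1)=[G:H]$; its degree is $\sum_i d_i-\sum_i e_i=0$, so $Q\equiv 1$ and $[G:H]=1$, i.e.\ $G=H$.

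The main obstacle is the Chevalley relations lemma of the first direction: everything geometric is concentrated in the divisibility trick $R\cdot h-h\in(\ell)$ combined with the descending induction on degree, and making that induction airtight---especially the passage from one reflection to the full group and its interaction with minimality of the generating set---is where the real work lies. By comparison the converse is essentially bookkeeping once Molien's formula and the freeness of $S^H$ over $S^G$ are granted.
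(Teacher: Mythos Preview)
The paper does not supply a proof of this statement: it is quoted as the classical Chevalley--Shephard--Todd theorem, attributed to \cite{Che,ST} (with \cite{IT} as a background reference), and used only as motivation for introducing pseudo-reflections. There is therefore nothing in the paper to compare your argument against.

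On its own merits, your outline is the standard textbook proof and is essentially sound. One small slip in the ``if'' direction: after differentiating a putative minimal relation $\varphi(f_1,\dots,f_m)=0$ you obtain $\sum_i (\partial_i\varphi)(f)\,\partial f_i/\partial x_k=0$, and the Chevalley relations lemma is then applied with the invariants $h_i=(\partial_i\varphi)(f)$ in the role of the $f_i$'s and the $\partial f_i/\partial x_k$ in the role of the coefficients $p_i$. What gets forced into $\mathfrak h$ is therefore $\partial f_1/\partial x_k$ (for a suitably chosen index), not $(\partial_j\varphi)(f)$ as you wrote; Euler's identity then turns this into $f_1\in(f_2,\dots,f_m)S$, contradicting minimality of the generating set. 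In the converse direction your argument is clean; you implicitly use that $H$ contains \emph{all} pseudo-reflections of $G$ (so the reflection count for $H$ is the same $r$) and that $H\trianglelefteq G$ because conjugates of pseudo-reflections are again pseudo-reflections---both are immediate but worth stating. With these adjustments your sketch matches the classical sources the paper cites.
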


Then, we figure out the relation between a pseudo-reflection on $V$ and a twisted deviation on $S=\mathbb{K}[V]$.
\begin{lem}\label{dual}
	Let $R$ be a pseudo-reflection on $V$. Then, $R$ induces a pseudo-reflection on $V^*$ (also denoted by $R$).
\end{lem}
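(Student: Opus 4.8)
The plan is to show that the natural contragredient action of $R$ on $V^{*}$ is again a pseudo-reflection, by checking the two defining conditions: finite order, and that $\mathrm{Im}(I - R^{*})$ is one-dimensional. First I would set $R^{*}\colon V^{*}\to V^{*}$ to be the transpose (dual) map, defined by $\langle R^{*}\xi, v\rangle = \langle \xi, R v\rangle$ for all $\xi\in V^{*}$, $v\in V$; since we want a $G$-action rather than an anti-action one typically takes $(R^{-1})^{*}$, but as $R$ has finite order this is a power of $R^{*}$ and the pseudo-reflection property is insensitive to that choice, so I will just argue for $R^{*}$. The finite-order condition is immediate: if $R^{m} = I$ on $V$, then dualizing reverses composition but preserves identity, so $(R^{*})^{m} = (R^{m})^{*} = I^{*} = I$ on $V^{*}$.

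The substantive point is the rank condition. The key step is the linear-algebra fact that a matrix and its transpose have the same rank, applied to $I - R$: since $I - R^{*} = (I - R)^{*}$ is the transpose of $I - R$, we get $\mathrm{rank}(I - R^{*}) = \mathrm{rank}(I - R) = 1$, hence $\mathrm{Im}(I - R^{*})$ is one-dimensional. Concretely, one can also see this via annihilators: $\mathrm{Im}(I - R^{*}) = \big(\ker(I - R)\big)^{\perp}$ inside $V^{*}$, and $\dim \ker(I - R) = n - 1$ because $\dim \mathrm{Im}(I - R) = 1$, so its annihilator has dimension $n - (n-1) = 1$. Either route gives the claim. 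I would also remark that $R^{*}$ is an automorphism of $V^{*}$ (being invertible, with inverse $(R^{-1})^{*}$), so $R^{*}\in \mathrm{Aut}(V^{*})$, completing the verification that $R^{*}$ is a pseudo-reflection.

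Finally, to match the paper's convention of an honest left $G$-action on $V^{*}$ (so that the induced action on $S = \mathbb{K}[V]$ in Eq.~\eqref{eq:action} is a group action), I would note that the element of $\mathrm{Aut}(V^{*})$ actually attached to $R$ is $g \mapsto (\rho(g)^{-1})^{*}$; since $(R^{-1})^{*} = (R^{*})^{-1}$ is a power of $R^{*}$ (as $R^{*}$ has finite order $m$, its inverse is $(R^{*})^{m-1}$), and powers of a pseudo-reflection need not be pseudo-reflections in general, a small extra check is warranted here — but for $R$ itself a pseudo-reflection one checks $R^{-1} = R^{m-1}$ satisfies $\mathrm{Im}(I - R^{-1}) = \mathrm{Im}(R^{-1}(R - I)) = R^{-1}\,\mathrm{Im}(R - I)$, which is still one-dimensional since $R^{-1}$ is invertible; so $R^{-1}$ is a pseudo-reflection on $V$, and then the dual argument above shows $(R^{-1})^{*}$ is a pseudo-reflection on $V^{*}$. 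I expect the main (minor) obstacle to be precisely this bookkeeping about which dual/inverse-dual map to use; the rank computation itself is routine.
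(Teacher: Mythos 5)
Your proof is correct, and it takes a genuinely different route from the paper's. The paper works in coordinates: it chooses a basis in which the fixed hyperplane is spanned by $e_1,\dots,e_{n-1}$, uses $R^m=I$ to argue that $R$ is conjugate to $\mathrm{diag}(1,\dots,1,\omega)$ with $\omega\neq 1$ an $m$-th root of unity, and then reads off the induced map on the dual basis ($x_i\mapsto x_i$ for $i\le n-1$, $x_n\mapsto \omega^{-1}x_n$), which is visibly a pseudo-reflection. Your argument is coordinate-free: finite order passes to the dual because $(R^m)^{*}=(R^{*})^m$, and the rank condition passes because a map and its transpose have the same rank (equivalently, $\mathrm{Im}\bigl((I-R)^{*}\bigr)=\bigl(\ker(I-R)\bigr)^{\perp}$ has dimension $n-(n-1)=1$). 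What your approach buys is robustness and brevity: you never need diagonalizability (so the argument is insensitive to the characteristic-zero hypothesis that the paper implicitly leans on to split off the eigenvalue $\omega$), and you explicitly resolve the $R^{*}$ versus $(R^{-1})^{*}$ bookkeeping, which the paper's definition $R(x_i)(e_j):=x_i(R^{-1}(e_j))$ uses but does not comment on; your observation that $R^{-1}=R^{m-1}$ is again a pseudo-reflection (since $\mathrm{Im}(I-R^{-1})=R^{-1}\,\mathrm{Im}(R-I)$) closes that gap cleanly. What the paper's computation buys in exchange is the explicit eigenvalue description ($R$ acts on $V^{*}$ fixing a hyperplane and scaling a line by $\omega^{-1}$), which is what it actually uses downstream in Section 2.2 to identify the linear form $l_R$ and the twisted derivation $D_R$; your abstract argument proves the lemma as stated but would need one extra line to extract that concrete form.
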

\begin{proof}
	Let $\{e_1,\ldots,e_n\}$ be a basis of $V$ such that $W={\rm Span} \{e_1,\ldots,e_{n-1}\}$ is fixed by $R$.
	By $R^m=\left(
	\begin{array}{ccccc}
		1 &   &   &   & a_1 \\
		& 1 &   &   & a_2 \\
		&   & \ddots &   & \vdots   \\
		&   &   & 1 & a_{n-1} \\
		&   &   &   & a_n \\
	\end{array}
	\right)^m
	=I$, we see that $R$ is given by the diagonal matrix $diag(1,\ldots,1,\omega)$, where $\omega\neq 1$ is an $m$-th primitive root over $\mathbb{K}$. Denote $\{x_1,\ldots,x_n\}$, the dual basis of $V^*$, such that $x_i(e_j)=\delta_{ij}$. 
	Thus, the induced automorphism on $V^{*}$, defined by $R(x_i)(e_j):=x_i(R^{-1}(e_j))$, satisfies that
	$R(x_i)=x_i$, $1\leq i\leq n-1$ and $R(x_n)=(1/\omega) x_n$. Therefore, $R$ is a pseudo-reflection on $V^*$.
\end{proof}

\subsection{Pseudo-Reflections Induced by Twisted Deviations}\label {twisted}
Let $(A,\cdot)$ be a commutative associative algebra and $R:A\rightarrow A$ a linear map on $A$. Recall from~\cite{sigma} the definition of a twisted derivation (also named a $\sigma$-derivation).
\begin{defi}
	A linear map $D:A\rightarrow A$ is called a \textbf{{twisted derivation}} with respect to  $R$ if $D$ satisfies the twisted Leibniz rule:
	\begin{equation}\label{eq:s-der}
		D(fg)=D(f)g+R(f)D(g),\ f,g\in A.
	\end{equation}
\end{defi}
\begin{rmk}
	When $R=I$, $D$ is a derivation on $A$.
\end{rmk}

Recall from Section~\ref{Pre} that for a fixed non-zero $v_R\in Im (I-R)\subset V$, there exists a $\Delta_R\in V^*$ such that
\begin{equation}
	(I-R)v=\Delta_R(v)v_R,\ \ \forall v\in V.
\end{equation}

By Lemma~\ref{dual}, for a fixed non-zero $l_R\in Im (I-R)\subset V^*$, there also exists a $\Delta_R\in V$ such that
\begin{equation}\label{eq:ref}
	(I-R)x=\Delta_R(x)l_R,\ \ \forall x\in V^*.
\end{equation}

Also, denote $R:S\rightarrow S$ as an extension of  $R\in {\rm Aut} (V)$ satisfying
\[
R(k_1 f+k_2h)=k_1R(f)+k_2R(h)\ {\rm and}\  R(fh)=R(f)R(h).
\]

\begin{thm}\label{main12}
For each $f\in S$, there exists a twisted derivation $D_R:S\rightarrow S$ with respect to $R$
	such that
	\begin{equation}
		R(f)=f-D_R(f)l_R.
	\end{equation}
\end{thm}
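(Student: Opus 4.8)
The plan is to construct $D_R$ explicitly by decomposing $S = \mathbb{K}[V]$ according to the eigenspace structure of $R$ on $V^*$ established in Lemma~\ref{dual}, and then verify the twisted Leibniz rule \eqref{eq:s-der} by a degree/multiplicativity argument. Recall from Lemma~\ref{dual} that there is a basis $\{x_1,\dots,x_n\}$ of $V^*$ with $R(x_i) = x_i$ for $1 \le i \le n-1$ and $R(x_n) = \omega^{-1} x_n$ for a primitive $m$-th root of unity $\omega \ne 1$; in this basis we may take $l_R = x_n$ (up to scalar) since $\operatorname{Im}(I-R) \subset V^*$ is spanned by $x_n$. Since $R$ on $S$ is the algebra automorphism extending this, for a monomial $f = x_1^{i_1}\cdots x_n^{i_n}$ we get $R(f) = \omega^{-i_n} f$. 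The defining relation $R(f) = f - D_R(f)\, l_R$ then forces $D_R(f)\, x_n = (1-\omega^{-i_n}) f$, so we are compelled to set
\[
D_R\big(x_1^{i_1}\cdots x_n^{i_n}\big) = (1-\omega^{-i_n})\, x_1^{i_1}\cdots x_n^{i_n-1}
\]
when $i_n \ge 1$, and $D_R(f) = 0$ when $i_n = 0$ (consistent, since then $R(f) = f$). Extend $D_R$ linearly to all of $S$. By construction $R(f) = f - D_R(f) l_R$ holds on monomials, hence on all of $S$ by linearity, so the stated identity is automatic once $D_R$ is shown to be a twisted derivation.

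First I would check $D_R$ is well defined: the formula is given on the monomial basis of $S$ and extended linearly, so there is nothing to check beyond noting that the two cases ($i_n = 0$ versus $i_n \ge 1$) agree in spirit, both expressible as $D_R(f) x_n = (1 - R)(f)$ times the appropriate thing — in fact the cleanest statement is simply $D_R(f)\cdot l_R = (I - R)(f)$ for all $f \in S$, which is exactly the rearranged target equation. Next I would verify \eqref{eq:s-der}. It suffices to check it on pairs of monomials $f, g$ by bilinearity. Writing $f = f' x_n^{a}$, $g = g' x_n^{b}$ with $f', g'$ monomials in $x_1,\dots,x_{n-1}$ (so $R(f') = f'$, $R(g') = g'$), one computes both sides: $D_R(fg) = (1 - \omega^{-(a+b)}) f' g' x_n^{a+b-1}$, while $D_R(f)g + R(f)D_R(g) = (1-\omega^{-a}) f' x_n^{a-1} g' x_n^b + \omega^{-a} f' x_n^a (1-\omega^{-b}) g' x_n^{b-1}$, and the identity $(1-\omega^{-(a+b)}) = (1-\omega^{-a}) + \omega^{-a}(1-\omega^{-b})$ closes the case (with the boundary cases $a=0$ or $b=0$ handled by the same algebra, since the formula $D_R(x^c) = (1-\omega^{-c})x^{c-1}$ reads $0$ when $c=0$).

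The main obstacle — really the only substantive point — is organizing the verification so that the boundary cases where exponents vanish do not require separate treatment; phrasing everything through the single identity $D_R(f)\, l_R = (I-R)(f)$ and the multiplicativity of $R$ on $S$ avoids a proliferation of cases. One should also remark that $D_R$ genuinely lands in $S$ (it does, being defined by an integer shift of a monomial exponent) and note the normalization: the choice of $l_R$ fixes $D_R$ uniquely, and a different choice of nonzero $l_R \in \operatorname{Im}(I-R)$ rescales $D_R$ inversely, so "there exists" is the correct quantifier. Finally, I would observe that when $R = I$ this construction degenerates ($D_R = 0$), consistent with the remark that $R = I$ must be excluded for a pseudo-reflection, whereas the associated Lie algebra of Witt type in the paper arises from a different (untwisted) derivation — a point worth flagging but not part of this proof.
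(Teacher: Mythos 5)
Your proof is correct, but it takes a different route from the paper's. You diagonalize $R$ via Lemma~\ref{dual}, take $l_R = x_n$, and write down a closed formula $D_R(x_1^{i_1}\cdots x_n^{i_n}) = (1-\omega^{-i_n})\,x_1^{i_1}\cdots x_n^{i_n-1}$, then verify the twisted Leibniz rule \eqref{eq:s-der} through the eigenvalue identity $(1-\omega^{-(a+b)}) = (1-\omega^{-a}) + \omega^{-a}(1-\omega^{-b})$. The paper instead stays basis-free in the relevant sense: it uses \eqref{eq:ref} to expand $R(x_{a_1}\cdots x_{a_k}) = (x_{a_1}-\Delta_R(x_{a_1})l_R)\cdots(x_{a_k}-\Delta_R(x_{a_k})l_R)$, observes that every non-leading term carries a factor of $l_R$ so that $D_R(f)$ is defined implicitly by $R(f) = f - D_R(f)l_R$, and then derives the Leibniz rule purely from the multiplicativity $R(fh)=R(f)R(h)$ by comparing $fh - D_R(fh)l_R$ with $fh - (D_R(f)h + R(f)D_R(h))l_R$ and cancelling $l_R$ (legitimate since $S$ is an integral domain). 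Your version buys an explicit formula for $D_R$ and makes the uniqueness and the rescaling dependence on the choice of $l_R$ transparent; the paper's version buys a shorter verification of the Leibniz rule that never touches eigenvalues and would survive in situations where $R$ is not conveniently diagonalized. Your handling of the boundary exponents via the single identity $D_R(f)\,l_R = (I-R)(f)$ is sound, and your closing remarks on the degenerate case $R=I$ and on the quantifier are accurate but inessential.
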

\begin{proof}
	First, we prove that $R(f)$ can be uniquely written as $R(f)=f-D_R(f)l_R$ for some $D_R:S\rightarrow S$.
	It follows from (\ref{eq:ref}) that, for $1\leq a_i\leq n$,
	\begin{eqnarray*}
		R(x_{a_1}\ldots x_{a_k})&=&(Rx_{a_1})\ldots (Rx_{a_k})\\
		&=&(x_{a_1}-\Delta_R(x_{a_1})l_R)\ldots (x_{a_k}-\Delta_R(x_{a_k})l_R)
	\end{eqnarray*}
	which can be expressed as
	\[x_{a_1}\ldots x_{a_k}-D_R(x_{a_1}\ldots x_{a_k})l_R,\]
	where $D_R$ maps the monomial to a polynomial in $S$.
	As a consequence, $R(f)$ can be written as
	\begin{equation}
		R(f)=f-D_R(f)l_R,
	\end{equation}
	where $D_R:S\rightarrow S$ is a linear map.
	Then, we prove that $D_R$ is a twisted derivation on $S$ with respect to $R$.
	On the one hand,
	\[R(fh)=fh-D_R(fh)l_R.\]
	
	{On the} other hand, 
	\begin{eqnarray*}
		R(f)R(h)=R(f)(h-D_R(h)l_R)&=&(f-D_R(f)l_R)h-R(f)D_R(h)l_R\\
		&=&fh-(D_R(f)h+R(f)D_R(h))l_R.
	\end{eqnarray*}
	
	Therefore, $R(fh)=D_R(f)h+R(f)D_R(h)$.
\end{proof}
\begin{rmk}
	When restricting $D_R$ to $V^*$, $D_R=\Delta_R$ on $V^*$. When restricting $(I-D_R)$ to $V^*$, $(I-D_R)$ is a pseudo-reflection on $V^*$.
\end{rmk}

\section{Left-Alia Algebras and Their Representations}
\subsection{Left-Alia Algebras and Twisted Derivations}

\vspace{6pt}
\begin{defi}[\cite{Dzh09}]
	A \textbf{{left-Alia algebra}} is a vector space $A$ together with a bilinear \linebreak  map $[\cdot,\cdot]:A\times A\rightarrow A$ satisfying the symmetric Jacobi property:
	\begin{equation*}
		[[x, y], z]+[[y, z], x]+[[z, x], y]=[[y, x], z]+[[z, y], x]+[[x, z], y],\;\forall x,y,z\in A.
	\end{equation*}
\end{defi}
\begin{rmk}
	A left-Alia algebra $(A,[\cdot,\cdot])$ is a Lie algebra if and only if the bilinear map $[\cdot,\cdot]$ is skew-symmetric.
	On the other hand, any commutative algebra $(A,[\cdot,\cdot])$ in the sense that $[\cdot,\cdot]$ is symmetric is a left-Alia algebra.
\end{rmk}

We can obtain a class of left-Alia algebras from twisted derivations.

\begin{lem}
	Let $D:A\rightarrow A$  be a twisted derivation of the commutative associative algebra $(A,\cdot)$. \linebreak  Then, $D$ satisfies
	\begin{equation}\label{eq:d-use}
		xD(y)-D(x) y= R(x) D(y)-D(x) R(y),\quad\forall~x,y\in A.
	\end{equation}
\end{lem}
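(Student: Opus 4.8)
The plan is to rewrite equation~\eqref{eq:d-use} as $\big(x - R(x)\big)D(y) = \big(D(x)\big)\big(y - R(y)\big)$ and to recognize that, by Theorem~\ref{main12} (or more precisely by the defining relation $R(f) = f - D_R(f)l_R$ that the twisted derivation satisfies in the situation of interest), the factor $x - R(x)$ is itself controlled by $D$. The key observation is that in the commutative associative setting underlying our construction, one has $f - R(f) = D(f)\,l_R$ for a fixed element $l_R$; substituting this into both sides of the rearranged identity gives $D(x)\,l_R\,D(y)$ on the left and $D(x)\,D(y)\,l_R$ on the right, which are equal by commutativity. So the proof reduces to: (i) move all terms to one side and factor; (ii) apply the relation $f - R(f) = D(f)l_R$; (iii) invoke commutativity of the product.

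First I would start from the twisted Leibniz rule $D(fg) = D(f)g + R(f)D(g)$ and observe that, by commutativity, we also have $D(fg) = D(gf) = D(g)f + R(g)D(f)$. Equating the two expressions for $D(fg)$ yields
\begin{equation*}
D(f)g + R(f)D(g) = D(g)f + R(g)D(f),
\end{equation*}
and rearranging gives exactly $fD(g) - D(f)g = R(f)D(g) - D(f)R(g)$ — wait, let me be careful with signs: we get $D(f)g - R(g)D(f) = D(g)f - R(f)D(g)$, i.e. $\big(g - R(g)\big)D(f) = \big(f - R(f)\big)D(g)$. This is a symmetric statement, and after relabeling it is equivalent to~\eqref{eq:d-use}. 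This is in fact the cleanest route: the identity~\eqref{eq:d-use} follows \emph{purely formally} from the twisted Leibniz rule together with commutativity of $(A,\cdot)$, without needing the explicit form of $R$ or $l_R$ at all.

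The main step, then, is just the symmetrization of the twisted Leibniz rule using $fg = gf$, followed by collecting the $D(f)$ and $D(g)$ terms. I expect no genuine obstacle here; the only thing to watch is the sign bookkeeping when passing between the two displayed forms, and making sure the final rearrangement matches the precise form stated in~\eqref{eq:d-use} (namely $xD(y) - D(x)y = R(x)D(y) - D(x)R(y)$). Concretely: from $D(f)g + R(f)D(g) = D(g)f + R(g)D(f)$ and commutativity ($gD(f) = D(f)g$, $fD(g) = D(g)f$), subtract $R(f)D(g) + gD(f)$ from both sides to obtain $-R(f)D(g) + fD(g) = R(g)D(f) - D(f)g$ wait — more directly, rearrange to $fD(g) - D(f)g = R(f)D(g) - R(g)D(f)$; then since the right-hand side of~\eqref{eq:d-use} is $R(x)D(y) - D(x)R(y)$ and $D(x)R(y) = R(y)D(x)$ by commutativity, the two agree after setting $x = f$, $y = g$. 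So the proof is a two-line computation, and the substance is simply noticing that commutativity makes the twisted Leibniz rule symmetric in its two arguments up to the $R$-twisted correction terms.
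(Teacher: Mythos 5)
Your final argument is correct and is essentially the paper's own proof: both compute $D(xy)-D(yx)=0$ via the twisted Leibniz rule and commutativity, then rearrange (using $R(y)D(x)=D(x)R(y)$) to obtain \eqref{eq:d-use}. The preliminary plan invoking $f-R(f)=D(f)l_R$ is unnecessary and would only apply to the special polynomial-ring construction, but you correctly discard it in favor of the purely formal symmetrization.
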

\begin{proof}
	By the commutative property of $(A,\cdot)$ and \eqref{eq:s-der}, we have
	\begin{eqnarray*}
		D(xy)-D(yx)=D(x)y+R(x)D(y)- D(y)x-R(y) D(x)=0.
	\end{eqnarray*}
	
Therefore, \eqref{eq:d-use} holds.
\end{proof}

\begin{thm}\label{thm:R-D}
	Let $(A,\cdot)$ be a commutative associative algebra and $D$ be a twisted derivation. For all $x,y\in A $, define the bilinear map $[\cdot,\cdot]_R:A\times A\rightarrow A$ by
	\begin{equation}\label{eq:R-der}
		[x,y]_R:=[x,y]_{D}=x D(y)-R(y) D(x).
	\end{equation}
	
Then, $(A,[\cdot,\cdot]_R)$ is a left-Alia algebra.
\end{thm}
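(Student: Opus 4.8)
The plan is to verify the symmetric Jacobi identity \eqref{eq:0-Alia} for the bracket $[x,y]_R = xD(y) - R(y)D(x)$ by direct expansion, using the commutativity and associativity of $(A,\cdot)$ together with the twisted Leibniz rule \eqref{eq:s-der} and its consequence \eqref{eq:d-use}. The natural reformulation is to define the "Alia-tor"
\[
\mathrm{Alia}(x,y,z) := [[x,y]_R,z]_R + [[y,z]_R,x]_R + [[z,x]_R,y]_R - [[y,x]_R,z]_R - [[z,y]_R,x]_R - [[x,z]_R,y]_R,
\]
and to show $\mathrm{Alia}(x,y,z)=0$ for all $x,y,z\in A$. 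Note that the left-hand side minus the right-hand side of \eqref{eq:0-Alia} is exactly $\sum_{\text{cyc}}\big([[x,y]_R,z]_R - [[y,x]_R,z]_R\big)$, so the key observation to extract first is that $[x,y]_R - [y,x]_R = xD(y) - R(y)D(x) - yD(x) + R(x)D(y)$, which by \eqref{eq:d-use} collapses: $xD(y) - R(y)D(x) = R(x)D(y) - D(x)R(y) + D(x)y$ — wait, more cleanly, \eqref{eq:d-use} gives $xD(y) - yD(x) = R(x)D(y) - R(y)D(x)$, hence $[x,y]_R - [y,x]_R = \big(xD(y)-R(y)D(x)\big) - \big(yD(x)-R(x)D(y)\big) = \big(xD(y)-yD(x)\big) + \big(R(x)D(y)-R(y)D(x)\big) = 2\big(R(x)D(y)-R(y)D(x)\big)$. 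So writing $\langle x,y\rangle := R(x)D(y) - R(y)D(x)$, which is skew-symmetric in $x,y$, we reduce the whole identity to proving $\sum_{\text{cyc}} [\langle x,y\rangle, z]_R = 0$, i.e. $\sum_{\text{cyc}} \big(\langle x,y\rangle D(z) - R(z)D(\langle x,y\rangle)\big) = 0$.

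The term $\sum_{\text{cyc}}\langle x,y\rangle D(z) = \sum_{\text{cyc}}\big(R(x)D(y)D(z) - R(y)D(x)D(z)\big)$ vanishes term-by-term under the cyclic sum because each product $R(\cdot)D(\cdot)D(\cdot)$ appears once with each sign (this uses only commutativity of $\cdot$). The remaining obstacle — and I expect this to be the main one — is showing $\sum_{\text{cyc}} R(z)\,D\big(R(x)D(y) - R(y)D(x)\big) = 0$. Here one must expand $D(R(x)D(y))$ using the twisted Leibniz rule \eqref{eq:s-der}: $D(R(x)\cdot D(y)) = D(R(x))\,D(y) + R(R(x))\,D(D(y))$. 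This introduces the terms $D\circ R$ and $R\circ R$ and $D\circ D$, so I would first need to record how $D$ and $R$ interact — in particular, applying $D$ to $R(fg)=R(f)R(g)$ and comparing with \eqref{eq:s-der} applied to $fg$ should yield a compatibility relation (morally, in the invariant-theory setting $R$ and $D_R$ commute appropriately and $R^2$ acts as a twist), but for the abstract statement I will instead try to avoid needing $D\circ R$-identities by regrouping: write $R(x)D(y) = [y,x]_R$-type expression, or better, observe $R(x)D(y) = xD(y) - (x - R(x))D(y)$ and use that $(I-R)$ lands in a controlled subspace only in the geometric case — so abstractly the cleanest route is genuinely the brute-force expansion of all six nested brackets.

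Concretely I would: (i) expand each of the six terms $[[a,b]_R,c]_R = [a,b]_R D(c) - R(c)D([a,b]_R)$ with $[a,b]_R = aD(b)-R(b)D(a)$, giving for each a sum of a "polynomial-type" part $(aD(b)-R(b)D(a))D(c)$ and a "$D$-applied" part $-R(c)D(aD(b)-R(b)D(a))$; (ii) collect the polynomial-type parts over all six terms and check cancellation using commutativity alone (this is the routine bookkeeping step, organized by the cyclic/anti-cyclic pairing described above, which shows they cancel in the combination $\sum_{\text{cyc}}([[x,y]_R,z]_R - [[y,x]_R,z]_R)$); (iii) for the $D$-applied parts, apply \eqref{eq:s-der} to each $D(aD(b))$ and $D(R(b)D(a))$, and use \eqref{eq:d-use} to trade the surviving $D\circ D$ and $D\circ R$ contributions against each other until everything cancels. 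I expect step (iii) to be where the symmetric Jacobi identity \eqref{eq:0-Alia} — as opposed to the ordinary Jacobi identity — is essential: the nonvanishing of the ordinary alternating sum is precisely what is absorbed by the "symmetric" correction terms $[[y,x],z]+\cdots$ on the right-hand side, and tracking that carefully is the crux of the argument.
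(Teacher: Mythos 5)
Your setup coincides with the paper's: both reduce the symmetric Jacobi identity to $\circlearrowleft_{x,y,z}[[x,y]_R-[y,x]_R,z]_R=0$ and use \eqref{eq:d-use} to compute $[x,y]_R-[y,x]_R$. However, there is a genuine gap at exactly the point you flag as ``the main obstacle'': you leave unproved the vanishing of $\sum_{\mathrm{cyc}} R(z)\,D\bigl(R(x)D(y)-R(y)D(x)\bigr)$, speculate that one needs additional compatibility between $D$ and $R$ (control of $D\circ R$ and $R\circ R$), and then retreat to an unexecuted ``brute-force expansion'' whose step (iii) would run into the very same unresolved terms $D(R(b)D(a))=D(R(b))D(a)+R(R(b))D^{2}(a)$. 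As written, the proof is not complete.

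No extra $D$--$R$ compatibility is needed; the missing move is a substitution you already have. By \eqref{eq:d-use}, $R(x)D(y)-R(y)D(x)=xD(y)-yD(x)$, so rewrite the inner antisymmetrized bracket in this $R$-free form \emph{before} applying $D$. Then the twisted Leibniz rule \eqref{eq:s-der} gives
\begin{equation*}
D\bigl(xD(y)-yD(x)\bigr)=D(x)D(y)+R(x)D^{2}(y)-D(y)D(x)-R(y)D^{2}(x)=R(x)D^{2}(y)-R(y)D^{2}(x),
\end{equation*}
with no $D\circ R$ or $R\circ R$ term ever appearing, and the cyclic sum $\circlearrowleft_{x,y,z}R(z)\bigl(R(x)D^{2}(y)-R(y)D^{2}(x)\bigr)$ cancels pairwise by commutativity alone, exactly as your ``polynomial-type'' part does. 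This one-line substitution is precisely how the paper's proof closes the argument. A further small correction: your expectation that step (iii) is ``where the symmetric Jacobi identity, as opposed to the ordinary one, is essential'' is misplaced --- the symmetrization is spent once and for all in forming $[x,y]_R-[y,x]_R$ (that is what removes $R$ from the inner bracket via \eqref{eq:d-use}); everything after that is bookkeeping with commutativity.
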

\begin{proof} Let $x,y,z\in A$. By \eqref{eq:R-der}, we {have} 
	\begin{align*}
		[x,y]_R-[y,x]_R&~=xD(y)-R(y) D(x)-yD(x)+R(x) D(y)\\
		&\overset{\eqref{eq:d-use}}{=}2(x D(y)-yD(x)),
	\end{align*}
	and
	\begin{align*}
		D(xD(y)-yD(x))&\overset{\eqref{eq:s-der}}{=}D(x) D(y)-R(y) D^2(x)-D(y) D(x)+R(x) D^{2}(y)\\
		&=R(x)D^{2}(y)-R(y)\ D^2(x).
	\end{align*}
	
Furthermore,
	\begin{align*}
		&\circlearrowleft_{x,y,z}[[x,y]_R-[y,x]_R,z]_R\\
		=& \circlearrowleft_{x,y,z} 2[x D(y)-y D(x),z]_R\\
		\overset{\eqref{eq:R-der}}{=}&\circlearrowleft_{x,y,z} 2\big(x D(y)D(z)
		-y D(x) D(z)
		-R(x)D^2(y)R(z)+R(y) D^2(x) R(z)\big)\\
		=&~0.
	\end{align*}
	Therefore, the conclusion holds.
\end{proof}

\begin{rmk}
	Theorem~\ref{thm:R-D} can also be verified in the following way.
	Let $(A,\cdot)$ be a commutative associative algebra with linear maps $f,g:A\rightarrow A$.
	By~\cite{Dzh09}, there is a left-Alia algebra $(A,[\cdot,\cdot])$ given by
	\begin{equation}\label{eq:fg}
		[x,y]=x\cdot f(y)+g(x\cdot y),\;\forall x,y\in A,
	\end{equation}
	which is called a \textbf{{special left-Alia algebra}} with respect to $(A,\cdot,f,g)$.
	If $D$ is a twisted derivation of $(A,\cdot)$ with respect to $R$, then we see that $(A,[\cdot,\cdot]_{R})$ satisfies \eqref{eq:fg} for $$f=2D, g=-D.$$
	
	Hence, $(A,[\cdot,\cdot]_{R})$ is left-Alia.
\end{rmk}

\subsection{Examples of Left-Alia Algebras}

\vspace{6pt}
\begin{ex}
	Let $R$ be a reflection defined by $R(x_1)=x_2,R(x_2)=x_1,R(x_3)=x_3$ on three-dimensional vector space $V^*$ with a basis $\{x_1,x_2,x_3\}$. On the coordinate ring \mbox{$S=\mathbb{K}[x_1,x_2,x_3]$}  of $V$, $R$ can be also denoted an extension of $R$ satisfying \mbox{$R(fg)=R(f)R(g)$} and $R(k_1 f+k_2h)=k_1R(f)+k_2R(h)$. Let $D$ be the twisted derivation on $S$ induced by the reflection $R$. It follows from Theorem~\ref{main12} that $R(f)=f-D(f)(x_1-x_2)$. Take two polynomials, $f=\underset{i}{\sum}k_i f_i,g=\underset{j}{\sum}h_j g_j, k_i,h_j\in\mathbb{K}$, in $S$, where $f_i,g_j$ are monomials, $f_i=x_1^{n_{i,1}}x_2^{n_{i,2}}x_3^{n_{i,3}}, g_j=x_1^{m_{j,1}}x_2^{m_{j,2}}x_3^{m_{j,3}}$. We have
	\begin{align*}
		D(x_1)=&1, D(x_2)=-1, D(x_3)=0,\\
		D(x_1^{n_1})=&x_1^{n_1-1}+x_1^{n_1-2}x_2+\cdots+x_2^{n_1-1},\\
		D(x_2^{n_2})=&-x_1^{n_2-1}-x_1^{n_2-1}x_2-\cdots-x_2^{n_2-1},\\
		D(x_3^{n_3})=&0,
	\end{align*}
and
\begin{align*}
		D(\underset{i}{\sum} k_ix_1^{n_{i,1}}x_2^{n_{i,2}}x_3^{n_{i,3}})=&\underset{i}{\sum}k_i(D(x_1^{n_{i,1}}x_2^{n_{i,2}})x_3^{n_{i,3}}+R(x_1^{n_{i,1}}x_2^{n_{i,2}})D(x_3^{n_{i,3}}))\\
		=&\underset{i}{\sum} k_i(x_1^{n_{i,1}}D(x_2^{n_{i,2}})+R(x_2^{n_{i,2}})D(x_1^{n_{i,1}}))x_3^{n_{i,3}}\\
		=&\underset{i}{\sum} k_i(x_1^{n_{i,1}+n_{i,2}-1}+x_1^{n_{i,1}+n_{i,2}-2}x_2+\cdots+x_1^{n_{i,2}}x_2^{n_{i,1}-1}\\
		&-x_1^{n_{i,1}+n_{i,2}-1}-\cdots-x_1^{n_1}x_2^{n_2-1})x_3^{n_{i,3}}.
	\end{align*}
	
Let $[\cdot,\cdot]_R:S\times S\rightarrow S$ be the bilinear map defined in Theorem~\ref{thm:R-D}. Then,
	\begin{align*}
		[f,g]_R=&\underset{i,j}{\sum}k_ih_j [f_i,g_j]_R\\
		=&\underset{i,j}{\sum}k_ih_j (f_iD(g_j)-R(g_j)D(f_i))\\
		=&\underset{i,j}{\sum}k_ih_j (x_1^{n_{i,1}+m_{j,1}+m_{j,2}-1}x_2^{n_{i,2}}+\cdots+x_1^{n_{i,1}+m_{j,2}}x_2^{n_{i,2}+m_{j,1}-1}\\
		&-x_1^{n_{i,1}+m_{j,1}+m_{j,2}-1}x_2^{n_{i,2}}-\cdots-x_1^{m_{j,1}+n_{i,1}}x_2^{n_{i,2}+m_{j,2}-1}\\
		&-x_1^{m_{j,2}+n_{i,1}+n_{i,2}-1}x_2^{m_{j,1}}-\cdots-x_1^{n_{i,2}+m_{j,2}}x_2^{n_{i,1}+m_{j,1}-1}\\
		&+x_1^{m_{j,2}+n_{i,1}+n_{i,2}-1}x_2^{m_{j,1}}+\cdots+x_1^{n_{i,1}+m_{j,2}}x_2^{n_{i,2}+m_{j,1}-1})x_3^{m_{j,3}+n_{i,3}}.
	\end{align*}
	
Since $(S,\cdot)$ is a commutative associative algebra, by Theorem~\ref{thm:R-D} $(S,[\cdot,\cdot]_R)$ is a left-Alia algebra.
\end{ex}

\begin{pro}
	Let $(A, [\cdot,\cdot])$  be an $n$-dimensional ($n\geq2$) left-Alia algebra and  $\left\{e_{1}, \cdots, e_{n}\right\}$  be a basis of  $A$. For all positive integers~$1\leq i, j, t\leq n$ and structural constants $C_{i j}^{t}\in \mathbb{C}$,  set
	\begin{equation}\label{eq:muilt}
		[e_{i}, e_{j}]=\sum_{t=1}^{n} C_{i j}^{t} e_{t}.
	\end{equation}
	
	Then, $(A, [\cdot,\cdot])$ is a left-Alia algebra if and only if the structural constants $C_{i j}^{t}$ satisfy the \linebreak  following equation:
	\begin{equation}\label{structural-constants}
		\sum_{k,m=1}^{n}\left((C_{ij}^k-C_{ji}^k)C_{kl}^m+(C_{jl}^k-C_{lj}^k)C_{ki}^m+(C_{li}^k-C_{il}^k)C_{kj}^m\right)=0,\;\forall 1\leq i, j,l\leq n.
	\end{equation}
\end{pro}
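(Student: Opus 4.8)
The plan is to unwind the symmetric Jacobi identity \eqref{eq:0-Alia} on basis elements and match coefficients. First I would substitute $x=e_i$, $y=e_j$, $z=e_l$ into \eqref{eq:0-Alia}, so that the left-hand side becomes $[[e_i,e_j],e_l]+[[e_j,e_l],e_i]+[[e_l,e_i],e_j]$ and the right-hand side is the same expression with the inner brackets reversed. Rearranging, the identity \eqref{eq:0-Alia} for this triple is equivalent to
\begin{equation*}
\big[[e_i,e_j]-[e_j,e_i],e_l\big]+\big[[e_j,e_l]-[e_l,e_j],e_i\big]+\big[[e_l,e_i]-[e_i,e_l],e_j\big]=0.
\end{equation*}
Then I would expand each term using \eqref{eq:muilt} twice: $[e_i,e_j]-[e_j,e_i]=\sum_k(C_{ij}^k-C_{ji}^k)e_k$, and $[e_k,e_l]=\sum_m C_{kl}^m e_m$, so the first term is $\sum_{k,m}(C_{ij}^k-C_{ji}^k)C_{kl}^m e_m$, and similarly for the other two cyclic terms. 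Collecting the coefficient of each basis vector $e_m$ gives exactly \eqref{structural-constants}.

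For the forward direction, assuming $(A,[\cdot,\cdot])$ is left-Alia, the displayed identity holds for every triple of basis elements (it holds for all elements by hypothesis), and the expansion above shows each $e_m$-coefficient vanishes, which is \eqref{structural-constants}. For the converse, assuming \eqref{structural-constants} holds for all $i,j,l$, I would note that \eqref{eq:0-Alia} is trilinear in $x,y,z$, hence it suffices to verify it on basis triples; the same coefficient computation run backwards shows \eqref{eq:0-Alia} holds on all basis triples, so it holds identically. I would also remark that \eqref{structural-constants} is automatically symmetric enough that one need only check it for, say, $i\le j\le l$ or even just record it as stated.

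This is essentially a bookkeeping proof; the only mild subtlety is being careful with the two directions of the bracket in the ``antisymmetrized'' inner factor $C_{ij}^k-C_{ji}^k$ and making sure the cyclic pattern $(i,j,l)\to(j,l,i)\to(l,i,j)$ in \eqref{structural-constants} matches the pattern produced by moving everything in \eqref{eq:0-Alia} to one side. I expect no genuine obstacle; the main point to get right is the sign and index arrangement when passing from \eqref{eq:0-Alia} to the rearranged form, after which the coefficient extraction is routine and the equivalence is immediate.
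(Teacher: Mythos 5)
Your proposal is correct and follows essentially the same route as the paper: rewrite the symmetric Jacobi identity in the antisymmetrized form $[[e_i,e_j]-[e_j,e_i],e_l]+\text{cyclic}=0$, expand via the structure constants, and extract the coefficient of each $e_m$, using trilinearity for the converse. The paper's proof is just a terser version of this same computation.
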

\begin{proof}
	By \eqref{eq:0-Alia}, for all $e_{i}, e_{j},e_{l} \in \left\{e_{1}, \cdots, e_{n}\right\}$, 
	\begin{align}\label{eq:c-J}
		[[e_i, e_j]-[e_j,e_i], e_l]+[[e_j, e_l]-[e_l,e_j], e_i]+[[e_l, e_i]-[e_i,e_l], e_j]=0.
	\end{align}
	
Set
	\begin{equation*}
		[e_{i},e_j]=\sum_{k=1}^{n} C_{ij}^k e_{k},~[e_{j},e_l]=\sum_{k=1}^{n} C_{jl}^{k} e_{l},~ [e_{l} \cdot e_{i}]=\sum_{k=1}^{n} C_{li}^k e_{k},~~~~ C_{ij}^k,C_{jl}^{k},C_{li}^k\in \mathbb{C}.
	\end{equation*}
	
Therefore, Equation~\eqref{structural-constants} holds.
\end{proof}

As a direct consequence, we obtain the following:
\begin{pro}
	Let $A$ be a two-dimensional vector space
	over the complex field $\mathbb{C}$ with a basis $\{e_1, e_2\}$. Then, for any bilinear map $[\cdot,\cdot]$
	on $A$, $(A,[\cdot,\cdot])$ is a left-Alia algebra.
\end{pro}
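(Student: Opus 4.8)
The plan is to deduce this directly from the preceding proposition, which says that $(A,[\cdot,\cdot])$ is a left-Alia algebra if and only if the structural constants $C_{ij}^t$ determined by \eqref{eq:muilt} satisfy \eqref{structural-constants} for all $1\le i,j,l\le n$. Here $n=2$, so $i,j,l$ each range over $\{1,2\}$. I would first record two symmetry properties of the left-hand side of \eqref{structural-constants}, viewed as a function $F(i,j,l)$ of the triple of indices. First, $F$ is invariant under the cyclic permutation $(i,j,l)\mapsto(j,l,i)$, which is immediate since this permutation simply cycles the three summands of \eqref{structural-constants}. Second, $F(i,j,l)=0$ whenever $i=j$: in that case the first summand vanishes because $C_{ij}^k-C_{ji}^k=0$, while the remaining two summands become $(C_{il}^k-C_{li}^k)C_{ki}^m$ and $(C_{li}^k-C_{il}^k)C_{ki}^m$, which cancel. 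By cyclic invariance, $F(i,j,l)=0$ also holds whenever $j=l$ or $l=i$.

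The rest is purely combinatorial: since $i,j,l\in\{1,2\}$, every admissible triple has at least two equal entries by pigeonhole, hence $F(i,j,l)=0$ for all of them. Therefore \eqref{structural-constants} holds identically, and $(A,[\cdot,\cdot])$ is a left-Alia algebra.

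Alternatively — and this is the more conceptual route I would actually write out — one can argue without a basis. Introduce the left-Alia defect
\[
J(x,y,z):=\circlearrowleft_{x,y,z}\big([[x,y],z]-[[y,x],z]\big),\qquad x,y,z\in A,
\]
which, by \eqref{eq:c-J}, measures the failure of \eqref{eq:0-Alia}. It is trilinear because $[\cdot,\cdot]$ is bilinear, cyclically invariant by construction, and interchanging $x$ and $y$ negates it; hence $J$ is an alternating trilinear map $A^{\times 3}\to A$. Expanding $x,y,z$ in the basis $\{e_1,e_2\}$, every term $J(e_i,e_j,e_k)$ with $i,j,k\in\{1,2\}$ has a repeated argument and so vanishes, giving $J\equiv 0$; thus \eqref{eq:0-Alia} holds on all of $A$.

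There is essentially no obstacle here: the only point requiring care is the verification that the defect (equivalently, the left-hand side of \eqref{structural-constants}) is alternating, after which the conclusion is forced by $\dim A=2$.
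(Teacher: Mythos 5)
Your proposal is correct. The paper gives no written proof at all --- it states the result as ``a direct consequence'' of the structural-constants criterion \eqref{structural-constants} --- so your first argument is exactly the paper's intended route, with the one genuinely needed verification (that the left-hand side of \eqref{structural-constants} is cyclically invariant and vanishes whenever two of $i,j,l$ coincide, hence vanishes identically for $n=2$ by pigeonhole) actually carried out; the cancellation check for $i=j$ and the reduction of the other coincidences to it by cyclicity are both right. Your second, basis-free argument --- that the defect $J(x,y,z)=\circlearrowleft_{x,y,z}\big([[x,y],z]-[[y,x],z]\big)$ is an alternating trilinear map and therefore vanishes on a two-dimensional space --- is a genuinely cleaner route: it proves \eqref{eq:0-Alia} directly without coordinates, and it sidesteps the slight imprecision in the paper's formulation of \eqref{structural-constants} (where the sum over $m$ should really be a condition for each $m$ separately; your term-by-term cancellation establishes the stronger, correct statement in either reading). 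Either version would serve as a complete proof where the paper has none.
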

Next, we give some example of three-dimensional left-Alia algebras.

\begin{ex}\label{ex:3-left}
	Let $A$ be a three-dimensional vector space
	over the complex field $\mathbb{C}$ with a basis $\{e_1, e_2, e_3\}$. Define a bilinear map $[\cdot,\cdot]:A\times A\rightarrow A$ by
	\begin{align*}
		&[e_1,e_2]=e_1,~~[e_1,e_3]=e_1,~~[e_2,e_1]=e_2,~~[e_3,e_1]=e_3,\\
		&[e_1,e_1]=[e_2,e_2]=[e_3,e_3]=[e_2,e_3]=[e_3,e_2]=e_1+e_2+e_3.
	\end{align*}
	
	Then, $(A, [\cdot,\cdot])$ is a three-dimensional left-Alia algebra.
\end{ex}

\begin{rmk}
	A right-Leibniz algebra~\cite{Loday} is a vector space $A$ together with a bilinear operation  $[\cdot,\cdot]:A\otimes A\rightarrow A$ satisfying
	\begin{equation*}
		[[x, y], z]=[[x, z],y]+[x, [y, z]],\;\forall~ x,y,z\in A.
	\end{equation*}
	
	Then, we have
	\begin{equation*}
		[x,[y, z]]+[y,[z, x]]+[z,[x, y]]=[[x,y]-[y,x],z]+[[y,z]-[z,y],x]+[[z,x]-[x,z],y].
	\end{equation*}
	
	Therefore, if  a right-Leibniz algebra satisfies
	\begin{equation*}
		[x,[y, z]]+[y,[z, x]]+[z,[x, y]]=0,
	\end{equation*}
	then $(A,[\cdot,\cdot])$ is a left-Alia algebra.
\end{rmk}

\subsection{From Left-Alia Algebras to Anti-Pre-Lie Algebras}

\vspace{6pt}
\begin{defi}[\cite{LB2022}]
	Let $A$ be a vector space with a bilinear map $\cdot: A\times A\rightarrow A$. $(A,\cdot)$
	is called an \textbf{{anti-pre-Lie algebra}} if the following equations are satisfied:
	\begin{align}
		&x\cdot(y\cdot z)-y\cdot (x\cdot z)=[y,x]\cdot z,\label{eq:(5)}\\
		&[x,y]\cdot z+[y,z]\cdot x+[z,x]\cdot y=0,
	\end{align}
	where
	\begin{equation}
		[x, y] = x\cdot y -y \cdot x,
	\end{equation}
	for all $x, y, z \in A$.
\end{defi}

\begin{rmk}
	Let $(A,[\cdot,\cdot])$ be a left-Alia algebra. If $[\cdot,\cdot]:A\times A\rightarrow A$ satisfies
	\begin{equation}\label{eq:anti-pre-Lie}
		[x,[y, z]]-[y,[x,z]]=[[y,x],z]-[[x,y],z], \;\forall x,y,z\in A,
	\end{equation}
	then $(A,[\cdot,\cdot])$ is  an anti-pre-Lie algebra.
\end{rmk}

\subsection{From Left-Alia Algebras to Lie Triple Systems}

Lie triple systems originated from Cartan's studies on the Riemannian
geometry of totally geodesic submanifolds~\cite{Helgason}, which can be constructed using twisted derivations and left-Alia algebras.

\begin{defi}[\cite{Hodge}]
A \textbf{{Lie triple system}} is a vector space $A$ together with a trilinear operation $[\cdot,\cdot,\cdot]:A \times A \times A \rightarrow A$ such that the following three equations are  satisfied, for all $ x,~y,~z,~a,~b$ in $A$:
	\begin{align}
		[x,x,y] & =0, \label{eq:skew-symm}\\
		[x,y,z]+[y,z,x]+[z,x,y] & =0 \label{eq:cyc}
	\end{align}
	and
	\begin{equation}\label{eq:F-J}
		[a,b,[x,y,z]]=[[a,b,x],y,z]+[x,[a,b,y],z]+[x,y,[a,b,z]].
	\end{equation}
\end{defi}

\begin{pro}
	Let $(A,\cdot)$ be a commutative associative algebra and $D$ be a twisted derivation. \linebreak  Define
	the bilinear map $[\cdot,\cdot]_R:A\times A\rightarrow A$ by \eqref{eq:R-der}.
	And define the trilinear \linebreak  map $[\cdot,\cdot,\cdot]_R:A\times A\times A\rightarrow A$ by
	$$
	[x,y,z]_R:=\frac{1}{2}[[x,y]_R-[y,x]_R,z]_R,\;\forall x,y,z\in A.
	$$
	
	If $[\cdot,\cdot,\cdot]_R$ satisfies \eqref{eq:F-J}, then $(A,[\cdot,\cdot,\cdot]_R)$ is a Lie triple system.
\end{pro}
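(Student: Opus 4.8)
The plan is to check the three defining axioms \eqref{eq:skew-symm}, \eqref{eq:cyc}, \eqref{eq:F-J} of a Lie triple system for the trilinear map $[\cdot,\cdot,\cdot]_R$. Since the fundamental identity \eqref{eq:F-J} is granted by hypothesis, only \eqref{eq:skew-symm} and \eqref{eq:cyc} require work, and both follow almost immediately from facts already recorded about $[\cdot,\cdot]_R$ in the proof of Theorem~\ref{thm:R-D}. The first step is to rewrite $[\cdot,\cdot,\cdot]_R$ in the form used there: applying \eqref{eq:d-use} one gets $[x,y]_R-[y,x]_R=2\big(xD(y)-yD(x)\big)$, and hence
$$[x,y,z]_R=\tfrac12\big[[x,y]_R-[y,x]_R,z\big]_R=\big[xD(y)-yD(x),z\big]_R,\qquad \forall\,x,y,z\in A.$$

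Next I would verify \eqref{eq:skew-symm}: setting $y=x$ in the expression $xD(y)-yD(x)$ gives $0$, so by bilinearity of $[\cdot,\cdot]_R$ we obtain $[x,x,y]_R=[0,y]_R=0$. For \eqref{eq:cyc}, I would observe that the cyclic sum is exactly
$$[x,y,z]_R+[y,z,x]_R+[z,x,y]_R=\tfrac12\,\circlearrowleft_{x,y,z}\big[[x,y]_R-[y,x]_R,z\big]_R,$$
and the right-hand side vanishes precisely because $(A,[\cdot,\cdot]_R)$ is a left-Alia algebra by Theorem~\ref{thm:R-D} — indeed the symmetric Jacobi identity \eqref{eq:0-Alia} for $[\cdot,\cdot]_R$, after moving all terms to one side, is the statement that this cyclic sum is zero. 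Combining \eqref{eq:skew-symm}, \eqref{eq:cyc}, and the assumed \eqref{eq:F-J}, all axioms of a Lie triple system hold, so $(A,[\cdot,\cdot,\cdot]_R)$ is a Lie triple system.

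There is no serious obstacle here, since \eqref{eq:F-J} is hypothesized; the only point is the bookkeeping that \eqref{eq:cyc} is a restatement of the left-Alia axiom for $[\cdot,\cdot]_R$ and that \eqref{eq:skew-symm} follows from the antisymmetrized bracket $xD(y)-yD(x)$ killing the diagonal. If one prefers an argument that does not cite Theorem~\ref{thm:R-D}, one can reprove the vanishing of the cyclic sum directly from the two computations already displayed in its proof, namely $D\big(xD(y)-yD(x)\big)=R(x)D^2(y)-R(y)D^2(x)$ together with the expansion of $[xD(y)-yD(x),z]_R$ via \eqref{eq:R-der}; this is exactly the content of that proof and introduces nothing new.
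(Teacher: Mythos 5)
Your proposal is correct and follows essentially the same route as the paper: the skew-symmetry axiom is immediate from the antisymmetrized bracket vanishing on the diagonal, the cyclic identity is exactly the vanishing cyclic sum established in the proof of Theorem~\ref{thm:R-D} (equivalently, the left-Alia identity for $[\cdot,\cdot]_R$), and the fundamental identity is assumed. The paper's own proof records precisely these three observations, only more tersely.
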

\begin{proof}
	For all $x,y\in A$, it is obvious that $[x,x,y]_R=0$. By the proof of Theorem~\ref{thm:R-D}, Equation~\eqref{eq:cyc} holds. If, in addition, $[\cdot,\cdot,\cdot]_R$ satisfies \eqref{eq:F-J}, then $(A,[\cdot,\cdot,\cdot]_R)$ is a Lie triple~system.
\end{proof}

\begin{rmk}
	Let $(A,[\cdot,\cdot])$ be a left-Alia algebra. For all $x,y,z\in A$, set a trilinear \linebreak  map $[\cdot,\cdot,\cdot]:A \times A \times A \rightarrow A$ by $[x,y,z]=[[x, y]-[y,x], z]$. If $[\cdot,\cdot,\cdot]$ satisfies \eqref{eq:F-J},
	then $(A,[\cdot,\cdot,\cdot])$ is  a Lie triple system.
\end{rmk}

\subsection{Representations and Matched Pairs of left-Alia Algebras}

\vspace{6pt}
\begin{defi}
	A \textbf{{representation}} of a left-Alia algebra $(A,[\cdot,\cdot])$ is a triple $(l,r,V)$, where $V$ is a vector space and $l,r:A\rightarrow\mathrm{End}(V)$ are linear maps such that the following equation holds:
	\begin{equation}\label{eq:rep 0-Alia}
		l([x,y])-l([y, x])= r(x)r(y)v-r(y)r(x)+r(y)l(x)v-r(x)l(y),\;\forall x,y\in A, v\in V.
	\end{equation}
	
	Two representations, $(l,r,V)$ and $(l',r',V')$, of a left-Alia algebra $(A, [\cdot,\cdot] )$ are called \textbf{{equivalent}} if there is a linear isomorphism $\phi:V\rightarrow V'$ such that
	\begin{equation}
		\phi\big( l(x)v\big)=l'(x)\phi(v),\; \phi\big( r(x)v\big)=r'(x)\phi(v),\;\forall x\in A, v\in V.
	\end{equation}
\end{defi}

\begin{ex}
	Let $(\rho,V)$ be a representation of a Lie algebra $(\mathfrak{g},[\cdot,\cdot])$, that is, $\rho:\mathfrak{g}\rightarrow \mathrm{End}(V)$ is a linear map such that
	\begin{equation*}
		\rho([x,y])v=\rho(x)\rho(y)v-\rho(y)\rho(x)v,\;\forall x,y\in\mathfrak{g}, v\in V.
	\end{equation*}
	
	Then, both $(\rho,-\rho,V)$ and  $(\rho,2\rho,V)$ satisfy \eqref{eq:rep 0-Alia} and, hence, are representations of  $(\mathfrak{g},[\cdot,\cdot])$ as a left-Alia algebra.
\end{ex}


\begin{pro}\label{pro:semi}
	Let $(A,[\cdot,\cdot])$ be a left-Alia algebra, $V$ be a vector space and $l,r:A\rightarrow\mathrm{End}(V)$ be linear maps.
	Then, $(l,r,V)$ is a representation of $(A,[\cdot,\cdot])$ if and only if there is a left-Alia algebra on the direct sum $d=A\oplus V$ of vector spaces (\textbf{{the semi-direct product}}) given by
	\begin{equation}\label{eq:semi-d}
		[x+u,y+v]_d= [x, y]+l(x)v+r(y)u ,\;\forall x,y\in A, u,v\in V.
	\end{equation}
	
	In this case, we denote $(A\oplus V,[\cdot,\cdot]_d)=A\ltimes_{l,r}V$.
\end{pro}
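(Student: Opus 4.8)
The plan is to verify the symmetric Jacobi identity \eqref{eq:0-Alia} for $[\cdot,\cdot]_d$ directly, using that its failure is controlled by a cyclic quantity. For $X,Y,Z\in d=A\oplus V$ put
\begin{equation*}
J_d(X,Y,Z):=\circlearrowleft_{X,Y,Z}\big[\,[X,Y]_d-[Y,X]_d,\,Z\,\big]_d,
\end{equation*}
so that $(d,[\cdot,\cdot]_d)$ is a left-Alia algebra precisely when $J_d\equiv 0$. Since $J_d$ is trilinear and invariant under cyclic permutation of its arguments, it suffices to evaluate $J_d$ on homogeneous triples whose entries lie in $A$ or in $V$; up to cyclic rotation these reduce to the four types $(A,A,A)$, $(A,A,V)$, $(A,V,V)$ and $(V,V,V)$.

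The easy types come first. If $x,y,z\in A$, then $[\cdot,\cdot]_d$ restricts to $[\cdot,\cdot]$ on $A$, so $J_d(x,y,z)=0$ is exactly the left-Alia identity of $(A,[\cdot,\cdot])$. If at least two entries lie in $V$, then since $l,r$ are linear one has $[u,v]_d=0$ for all $u,v\in V$, and a direct check shows that each of the three summands of $J_d$ is either a bracket of two elements of $V$ (hence zero) or a bracket with a zero argument; thus $J_d$ vanishes identically on the types $(A,V,V)$ and $(V,V,V)$.

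The only type with content is $(A,A,V)$, which is also the case carrying the bulk of the (routine) bookkeeping: by cyclic invariance we may take $X=x\in A$, $Y=y\in A$, $Z=v\in V$. Expanding \eqref{eq:semi-d} gives $[x,y]_d-[y,x]_d=[x,y]-[y,x]$, $[y,v]_d-[v,y]_d=l(y)v-r(y)v$ and $[v,x]_d-[x,v]_d=r(x)v-l(x)v$, and substituting back yields
\begin{equation*}
J_d(x,y,v)=l([x,y])v-l([y,x])v-\big(r(x)r(y)v-r(y)r(x)v+r(y)l(x)v-r(x)l(y)v\big).
\end{equation*}
Hence $J_d(x,y,v)=0$ for all $x,y\in A$, $v\in V$ if and only if \eqref{eq:rep 0-Alia} holds, i.e. if and only if $(l,r,V)$ is a representation of $(A,[\cdot,\cdot])$. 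Assembling the four types gives the claimed equivalence, and the last sentence of the statement is just the notation $A\ltimes_{l,r}V$ for the resulting semidirect product. I expect the only point needing care to be tracking which slot the $V$-entry occupies in the mixed type, and this is exactly what the reduction to cyclic representatives handles.
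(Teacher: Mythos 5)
Your proof is correct, and it takes a different route from the paper's. The paper disposes of this proposition in one line by declaring it the special case of the matched-pair criterion (Proposition~\ref{pro:322}) in which $B=V$ carries the zero multiplication and $l_B=r_B=0$: there the compatibility equations \eqref{eq:defi:matched pairs1}--\eqref{eq:defi:matched pairs2} collapse to $0=0$ and the only surviving condition is that $(l,r,V)$ be a representation. Your argument is instead a self-contained direct verification: you rewrite the symmetric Jacobi identity \eqref{eq:0-Alia} as the vanishing of the cyclic trilinear quantity $J_d$, use trilinearity and cyclic invariance to reduce to the four homogeneous types, and observe that only the $(A,A,V)$ type carries content, where $J_d(x,y,v)$ is exactly the defect of \eqref{eq:rep 0-Alia}; the computation of the three summands ($l([x,y]-[y,x])v$, $r(x)(l(y)v-r(y)v)$, $r(y)(r(x)v-l(x)v)$) checks out, and both directions of the equivalence follow. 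What your approach buys is transparency and independence: the paper's cited Proposition~\ref{pro:322} is itself only justified by ``a straightforward computation,'' so your proof actually supplies the missing elementary content in this special case, whereas the paper's approach buys brevity and situates the semidirect product inside the more general matched-pair framework that is needed later for the Manin-triple theorem.
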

\begin{proof}
	This is the special case of matched pairs of left-Alia algebras where $B=V$ is equipped with the zero multiplication in Proposition~\ref{pro:322}.
\end{proof}

For a vector space $A$ with a bilinear map $[\cdot,\cdot]:A\times A\rightarrow A$, we set linear maps $\mathcal{L}_{[\cdot,\cdot]},\mathcal{R}_{[\cdot,\cdot]}:A\rightarrow\mathrm{End}(A)$ using
\begin{equation*}
	\mathcal{L}_{[\cdot,\cdot]}(x)y=[x, y]=\mathcal{R}_{[\cdot,\cdot]}(y)x,\;\forall x,y\in A.
\end{equation*}
\begin{ex}
	Let $(A,[\cdot,\cdot])$ be a left-Alia algebra.
	Then, $(\mathcal{L}_{[\cdot,\cdot]},\mathcal{R}_{[\cdot,\cdot]},A)$ is a representation of $(A,[\cdot,\cdot])$, which is called an \textbf{{adjoint representation}}.
	In particular, for a Lie algebra $(\mathfrak{g},[\cdot,\cdot])$ with the adjoint representation $\mathrm{ad}:\mathfrak{g}\rightarrow\mathrm{End}(\mathfrak{g})$ given by $\mathrm{ad}(x)y=[x,y],\;\forall x,y\in\mathfrak{g}$,
	\begin{equation*}
		(\mathcal{L}_{[\cdot,\cdot]},\mathcal{R}_{[\cdot,\cdot]},\mathfrak{g})=(\mathrm{ad},-\mathrm{ad},\mathfrak{g})
	\end{equation*}
	is a representation of $(\mathfrak{g},[\cdot,\cdot])$ as a left-Alia algebra.
\end{ex}


Let $A$ and $V$ be vector spaces. For a linear map $l :A\rightarrow\mathrm{End}(V)$, we set a linear map $l^{*}:A\rightarrow\mathrm{End}(V^{*})$ using
\begin{equation*}
	\langle l^{*}(x)u^{*},v\rangle=-\langle u^{*},l(x)v\rangle,\;\forall x\in A, u^{*}\in V^{*}, v\in V.
\end{equation*}

\begin{pro}\label{pro:dual rep}
	Let $(l,r,V)$ be a representation of a left-Alia algebra $(A,[\cdot,\cdot])$.
	Then, \linebreak  $( l^{*}, l^{*}-r^{*}, V^{*} )$ is also a representation of $(A,[\cdot,\cdot])$.
	In particular, $( \mathcal{L}^{*}_{[\cdot,\cdot]},\mathcal{L}^{*}_{[\cdot,\cdot]}-\mathcal{R}^{*}_{[\cdot,\cdot]}, A^{*} )$ is a representation of $(A,[\cdot,\cdot])$, which is called the \textbf{{coadjoint representation}}.
\end{pro}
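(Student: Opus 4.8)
The plan is to verify Equation~\eqref{eq:rep 0-Alia} directly for the triple $(l^*, l^* - r^*, V^*)$, reducing everything to the known fact that $(l,r,V)$ is a representation. First I would set up notation: write $\bar{l} = l^*$ and $\bar{r} = l^* - r^*$, and let $u^* \in V^*$, $v \in V$, $x,y \in A$ be arbitrary test elements. The goal is to show
\begin{equation*}
  \langle \bar{l}([x,y])u^* - \bar{l}([y,x])u^*, v\rangle
  = \langle \bar{r}(x)\bar{r}(y)u^* - \bar{r}(y)\bar{r}(x)u^* + \bar{r}(y)\bar{l}(x)u^* - \bar{r}(x)\bar{l}(y)u^*, v\rangle.
\end{equation*}
Using the defining relation $\langle l^*(a)u^*, w\rangle = -\langle u^*, l(a)w\rangle$ and the analogous relation for $r^*$, I would push all the dual operators on the left-hand side of each pairing over to the right-hand side, turning the identity into a statement about operators $l(a), r(a) \in \mathrm{End}(V)$ acting on $v$, paired against the single vector $u^*$.

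The key computational step is to carefully expand $\bar{r}(x) = l^*(x) - r^*(x)$ in the composites $\bar{r}(x)\bar{r}(y)$, $\bar{r}(y)\bar{l}(x)$, etc. For instance, $\langle \bar{r}(x)\bar{r}(y)u^*, v\rangle = \langle u^*, (l(y)-r(y))(l(x)-r(x))v\rangle$ after moving both dual maps across (each transposition contributes a sign, and two signs cancel, but one must be careful about the order reversal: $(\alpha\beta)^* = \beta^*\alpha^*$ in the convention where $\langle \alpha^* u, v\rangle = -\langle u, \alpha v\rangle$, so actually each factor contributes $-1$ and they compose in reversed order). After this bookkeeping, the right-hand side becomes $\langle u^*, \Phi(x,y)v\rangle$ for some explicit combination $\Phi(x,y)$ of the sixteen terms obtained by expanding the four products, and the left-hand side becomes $-\langle u^*, (l([x,y]) - l([y,x]))v\rangle$. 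It then remains to check that $\Phi(x,y) = -(l([x,y]) - l([y,x]))$ as operators on $V$, which should follow by applying Equation~\eqref{eq:rep 0-Alia} for $(l,r,V)$ (possibly with the roles of $x$ and $y$ swapped in one instance) together with routine cancellation of the mixed $l$--$r$ terms.

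The main obstacle I anticipate is purely organizational: keeping the signs and the order-reversal from the transpose convention consistent across roughly a dozen terms, and recognizing which rearrangement of Equation~\eqref{eq:rep 0-Alia} (in particular, whether one needs it with $(x,y)$ or with $(y,x)$, and whether one needs to add the instance to its own $x \leftrightarrow y$ swap) produces exactly the combination $\Phi(x,y)$. I do not expect any genuinely new idea to be required beyond the representation axiom for $(l,r,V)$. Finally, the ``in particular'' clause about the coadjoint representation is immediate: by the Example preceding this proposition, $(\mathcal{L}_{[\cdot,\cdot]}, \mathcal{R}_{[\cdot,\cdot]}, A)$ is the adjoint representation, so applying the general statement with $(l,r,V) = (\mathcal{L}_{[\cdot,\cdot]}, \mathcal{R}_{[\cdot,\cdot]}, A)$ yields that $(\mathcal{L}^*_{[\cdot,\cdot]}, \mathcal{L}^*_{[\cdot,\cdot]} - \mathcal{R}^*_{[\cdot,\cdot]}, A^*)$ is a representation, which is the asserted coadjoint representation.
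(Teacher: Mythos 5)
Your proposal is correct and follows essentially the same route as the paper: verify the representation identity for $(l^{*},l^{*}-r^{*},V^{*})$ by moving the dual operators across the pairing (with the sign and order-reversal bookkeeping you describe), after which the sixteen expanded terms collapse to $-r(y)(l-r)(x)+r(x)(l-r)(y)$ and the identity reduces to a single application of Equation~\eqref{eq:rep 0-Alia}. The anticipated cancellations do go through exactly as you expect, so no further idea is needed.
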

\begin{proof}
	Let $x,y\in A, u^{*}\in V^{*},v\in V$. Then, we have
	\begin{eqnarray*}
		&&\langle\big( l^{*}[x, y]- l^{*} [y, x]+(l^{*}-r^{*})(x)l^{*}(y)
		-(l^{*}-r^{*})(x)(l^{*}-r^{*})(y)\\
		&&+(l^{*}-r^{*})(y)(l^{*}-r^{*})(x)-(l^{*}-r^{*})(y)l^{*}(x)\big)u^{*},v\rangle\\
		&&=\langle (l^{*}[x, y]- l^{*} [y, x]+(l^{*}-r^{*})(x)r^{*}(y)-(l^{*}-r^{*})(y)r^{*}(x)\big)u^{*},v\rangle\\
		&&=\langle u^{*}, \big(l[y, x]-l[x, y]+r(y)(l-r)(x)-r(x)(l-r)(y)\big)v\rangle\\
		&&\overset{\eqref{eq:rep 0-Alia}}{=}0.
	\end{eqnarray*}
	
	{Hence, the conclusion follows.} 
\end{proof}

\begin{ex}
	Let $(\mathfrak{g},[\cdot,\cdot])$ be a Lie algebra.
	Then, the coadjoint representation of $(\mathfrak{g},[\cdot,\cdot])$ as a left-Alia algebra is
	\begin{equation*}
		(\mathrm{ad}^{*},\mathrm{ad}^{*}-(-\mathrm{ad}^{*}),\mathfrak{g}^{*})=( \mathrm{ad}^{*},2\mathrm{ad}^{*},\mathfrak{g}^{*}  ).
	\end{equation*}
	
Hence, there is a left-Alia algebra structure $\mathfrak{g}\ltimes_{\mathrm{ad}^{*}, 2\mathrm{ad}^{*}}\mathfrak{g}^{*}$ on the direct sum $\mathfrak{g}\oplus \mathfrak{g}^{*}$ of vector~spaces.
\end{ex}


\begin{rmk}
	In~\cite{Dzh09}, there is also the notion of a \textbf{{right-Alia algebra}}, defined as a vector space $A$ together with a bilinear map $[\cdot,\cdot]':A\times A\rightarrow A$ satisfying
\begin{equation}\label{eq:right Alia}
		[x,[y,z]']'+[y,[z,x]']'+[z,[x,y]']'=[x,[z,y]']'+[y,[x,z]']'+[z,[y, x]']',\;\forall x,y,z\in A.
	\end{equation}
	It is clear that $(A,[\cdot,\cdot]')$ is a right-Alia algebra if and only if the opposite algebra $(A,[\cdot,\cdot])$ of $(A,[\cdot,\cdot]')$, given by
	$[x,y]=[y,x]'$, is a left-Alia algebra.
	Thus, our study on left-Alia algebras can straightforwardly generalize a parallel study on right-Alia algebras.
	Consequently, if $(l,r,V)$ is a representation of a right-Alia algebra $(A,[\cdot,\cdot]')$, then $(r^{*}-l^{*},r^{*}, V^{*})$ is also a representation of $(A,[\cdot,\cdot]')$. Recall ~\cite{LB2023} that if $(l,r,V)$ is a representation of an anti-pre-Lie  algebra $(A,[\cdot,\cdot]_{\mathrm{anti}})$, then $(r^{*}-l^{*},r^{*}, V^{*})$ is also a representation of $(A,[\cdot,\cdot]_{\mathrm{anti}})$. Moreover, admissible Novikov algebras~\cite{LB2022}  are a subclass of anti-pre-Lie algebras.
	If $(l,r,V)$ is a representation of an admissible Novikov algebra $(A,[\cdot,\cdot]_{admissible\ Novikov})$, then $(r^{*}-l^{*},r^{*}, V^{*})$ is also a representation of the admissible Novikov algebra $(A,[\cdot,\cdot]_{admissible\ Novikov})$. Therefore, we have the following algebras which preserve the form $(r^{*}-l^{*},r^{*}, V^{*})$ of representations on the dual spaces:
	
	\vspace{6pt}
	$\{$right-Alia algebras$\}\supset\{$anti-pre-Lie algebras$\}\supset\{$admissible Novikov algebras$\}.$
\end{rmk}

\delete{
	\begin{Remark}
		Let $\g$ be a Lie algebra and $\g^*$ be its dual vector space. If a linear map $\ad^{*}:\g\rightarrow\mathrm{End}(\g^*)$ satisfying
		\begin{equation}\label{eq:ad}
			\langle \ad^{*}(x)\xi,y\rangle=-\langle \xi,\ad(x)y\rangle,\;\forall x,y\in A, \xi\in \g^{*}.
		\end{equation}
		Then $\ad^*$ is a representation of $\g$ in $\g^*$, which is called the coadjoint
		representation of $\g$. In Proposition~\ref{pro:dual rep}, if the left-Alia algebra $(A,[\cdot,\cdot])$ is skew-symmetric, that is for all $x\in A$ $(\mathcal{L}_{[\cdot,\cdot]}-\mathcal{R}_{[\cdot,\cdot]})(x)=2\mathcal{L}_{[\cdot,\cdot]}(x)$, then $\mathcal{L}_{[\cdot,\cdot]}-\mathcal{R}_{[\cdot,\cdot]}$ is a adjoint
		representation of Lie algebra. Furthermore, by \eqref{eq:ad}, $\mathcal{L}_{[\cdot,\cdot]}^*$ and
		$\mathcal{L}^{*}_{[\cdot,\cdot]}-\mathcal{R}^{*}_{[\cdot,\cdot]}$ are coadjoint
		representations of Lie algebras.
\end{Remark}}

Now, we introduce the notion of matched pairs of left-Alia algebras.

\begin{defi}\label{match}
	Let $(A,[\cdot,\cdot]_A )$ and $(B,[\cdot,\cdot]_B )$ be left-Alia algebras and $l_{A},r_{A}:A\rightarrow\mathrm{End}(B)$ and $l_{B},r_{B}:B\rightarrow\mathrm{End}(A)$ be linear maps.
	If there is a left-Alia algebra structure $[\cdot,\cdot]_{A\oplus B}$ on the direct sum $A\oplus B$ of vector spaces given by
	\begin{equation*}
		[x+a,y+b]_{A\oplus B}=[x,y]_A+l_{B}(a)y+r_{B}(b)x+[a,b]_B+l_{A}(x)b+r_{A}(y)a,\;\forall x,y\in A, a,b\in B,
	\end{equation*}
	then we say $\big( (A,[\cdot,\cdot]_A ),(B,[\cdot,\cdot]_B ), l_{A}, r_{A}, l_{B}, r_{B} \big)$ is a \textbf{{matched pair of left-Alia algebras}}.
\end{defi}

\begin{pro}\label{pro:322}
	Let $(A,[\cdot,\cdot]_A )$ and $(B,[\cdot,\cdot]_B )$ be left-Alia algebras and $l_{A},r_{A}:A\rightarrow\mathrm{End}(B)$ and $l_{B},r_{B}:B\rightarrow\mathrm{End}(A)$ be linear maps. Then, $\big( (A,[\cdot,\cdot]_A ),(B,[\cdot,\cdot]_B ), l_{A}, r_{A}, l_{B}, r_{B} \big)$ is a   matched pair of left-Alia algebras if and only if the triple $(l_{A}, r_{A}, B)$ is a representation of $(A,[\cdot,\cdot]_A )$, the triple
	$(l_{B}, r_{B}, A)$ is a representation of $(B,[\cdot,\cdot]_B )$ and the following equations hold:
	\begin{align}
		r_{B}(a)([x,y]_A-[y,x]_A)&=(l_{B}-r_{B})(a)[y,x]_A+(r_{B}-l_{B})(a)[x, y]_A\nonumber\\
		&+l_{B}\big((r_{A}-l_{A})(y)a\big)x+l_{B}\big((l_{A}-r_{A})(x)a\big)y,\label{eq:defi:matched pairs1}\\
		r_{A}(x)([a,b]_B-[b,a]_B)&=(l_{A}-r_{A})(x)[b,a]_{B}+(r_{A}-l_{A})(x)[a, b]_{B}\nonumber\\
		&+l_{A}\big((r_{B}-l_{B})(b)x\big)a+l_{A}\big((l_{B}-r_{B})(a)x\big)b,\label{eq:defi:matched pairs2}
	\end{align}
	
	for all $x,y\in A, a,b\in B$.
\end{pro}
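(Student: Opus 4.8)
The plan is to unravel the single requirement that $[\cdot,\cdot]_{A\oplus B}$ satisfies the symmetric Jacobi identity \eqref{eq:0-Alia} and to sort the resulting condition by the type of its arguments. Concretely, I would substitute three general elements $x+a,\ y+b,\ z+c$ (with $x,y,z\in A$, $a,b,c\in B$) into \eqref{eq:0-Alia} for $[\cdot,\cdot]_{A\oplus B}$, expand using the defining formula, and then project onto $A$ and onto $B$. Since \eqref{eq:0-Alia} is multilinear, it is equivalent to the collection of identities obtained by letting each of the three slots be a "pure $A$" element or a "pure $B$" element; so the whole statement reduces to checking the $2^3=8$ cases $(AAA),(AAB),(ABA),(BAA),(ABB),(BAB),(BBA),(BBB)$, and by the $A\leftrightarrow B$ symmetry of the setup it suffices to treat one representative of each orbit: $(AAA)$, $(AAB)$ and $(ABB)$ together with their permutations.

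The first observation is that the $(AAA)$ case is exactly the symmetric Jacobi identity for $(A,[\cdot,\cdot]_A)$, which holds by hypothesis, and likewise $(BBB)$ gives the identity for $(B,[\cdot,\cdot]_B)$. Next, I would collect the $A$-component and the $B$-component of the cases with exactly one $B$-entry (i.e.\ $(AAB)$ and its cyclic/transpose rearrangements). Here the terms involving only $l_A,r_A$ acting on $B$ assemble — after using the symmetric Jacobi identity of $A$ to cancel the purely-$A$ part — into precisely the representation condition \eqref{eq:rep 0-Alia} for $(l_A,r_A,B)$; symmetrically the one-$A$-entry cases yield the representation condition for $(l_B,r_B,A)$. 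Finally, the mixed cases with one $A$-entry and two $B$-entries (resp.\ two $A$-entries and one $B$-entry) produce, after grouping the terms landing in $A$, exactly the compatibility equations \eqref{eq:defi:matched pairs2} (resp.\ \eqref{eq:defi:matched pairs1}); the components landing in $B$ turn out to be consequences of the representation axioms already extracted, so they impose nothing new.

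I would present the computation in a streamlined way: write $T(p,q,s):=[[p,q]_{A\oplus B},s]_{A\oplus B}-[[q,p]_{A\oplus B},s]_{A\oplus B}$ so that \eqref{eq:0-Alia} reads $\circlearrowleft_{p,q,s} T(p,q,s)=0$, expand $T$ once in full generality, and then specialize. The bookkeeping is the only real labor; the conceptual content is just "type-grading" the identity. The main obstacle is organizational rather than mathematical: one must be careful that the mixed-type terms are distributed correctly between the $A$-valued and $B$-valued parts, and in particular verify the claim that the $B$-valued part of the two-$B$-entry case follows from the already-established representation axiom for $(l_A,r_A,B)$ (and dually), so that the only genuinely new constraints are \eqref{eq:defi:matched pairs1} and \eqref{eq:defi:matched pairs2}. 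Once the case analysis is laid out, each individual identification is a routine rearrangement, so I would state the correspondence case-by-case and leave the term-matching to the reader, exhibiting one case (say $(ABB)$) in detail as a template.
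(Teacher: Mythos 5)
Your overall strategy is the right one, and it is essentially what the paper means by ``a straightforward computation'' --- the paper's own proof of Proposition~\ref{pro:322} consists of that single sentence, so your type-grading of the trilinear identity \eqref{eq:0-Alia} on $A\oplus B$ is a faithful elaboration of it. However, your bookkeeping of which condition lives in which component is wrong in a way that would derail the verification you propose. The correct tally is: the case with two $A$-entries and one $B$-entry (say $x,y\in A$, $a\in B$) splits into a $B$-valued part, which is precisely the representation condition \eqref{eq:rep 0-Alia} for $(l_A,r_A,B)$, and an $A$-valued part, which is precisely \eqref{eq:defi:matched pairs1}; dually, the case with one $A$-entry and two $B$-entries splits into an $A$-valued part giving the representation condition for $(l_B,r_B,A)$ and a $B$-valued part giving \eqref{eq:defi:matched pairs2}. (Every term of \eqref{eq:defi:matched pairs2} lies in $B$, not in $A$, so it cannot arise as ``the terms landing in $A$'' of any case.) In particular, your claim that the components landing in $B$ are consequences of the representation axioms and ``impose nothing new'' is false: the $B$-valued part of the two-$B$-entry case is exactly the new constraint \eqref{eq:defi:matched pairs2}, and nothing in the computation is redundant --- the four type-cases contribute, one-to-one, the two symmetric Jacobi identities (which hold by hypothesis), the two representation conditions, and the two compatibility equations. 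Executed as written, your plan would fail at the step where you try to verify the redundancy claim, and \eqref{eq:defi:matched pairs2} would appear unaccounted for.

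A further caution for the term-matching you propose to leave to the reader: expanding $\circlearrowleft\,[[p,q]-[q,p],s]$ on the direct sum produces terms of the form $[(l_B-r_B)(a)y,\,x]_A$ (the operator applied to $y$ first, then bracketed with $x$), whereas \eqref{eq:defi:matched pairs1} as printed contains $(l_B-r_B)(a)[y,x]_A$ (the operator applied to the bracket); these do not coincide for general linear maps. The computation in the proof of Theorem~\ref{thm:2} uses terms of the former shape, so you should reconcile the output of your expansion with the displayed equations rather than assume they match verbatim.
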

\begin{proof}
	The proof follows from a straightforward computation. 
\end{proof}

\subsection{Quadratic Left-Alia Algebras}

\vspace{6pt}
\begin{defi}\label{qua}
	A \textbf{{quadratic left-Alia algebra}} is a triple $(A,[\cdot,\cdot],\mathcal{B})$, where $(A,[\cdot,\cdot])$ is a left-Alia algebra and $\mathcal{B}$ is a nondegenerate symmetric bilinear form on $A$ which is invariant in the sense that
	
	\vspace{-6pt}\begin{equation}\label{eq:quad}
		\mathcal{B}([x, y],z)=\mathcal{B}(x,[z, y]-[y, z]),\;\forall x,y,z\in A.
	\end{equation}
\end{defi}
\begin{rmk}
	Since $\mathcal{B}$ is symmetric, it follows from Definition~\ref{qua} that
	\begin{equation}\label{eq:inv}
		\mathcal{B}([x, y],z)+\mathcal{B}(y,[x, z])=0,\;\forall x,y,z\in A.
	\end{equation}
\end{rmk}

\begin{lem}
	Let $(A,[\cdot,\cdot],\mathcal{B})$ be a quadratic left-Alia algebra.
	Then, $(\mathcal{L}_{[\cdot,\cdot]},\mathcal{R}_{[\cdot,\cdot]},A)$ and \linebreak $( \mathcal{L}^{*}_{[\cdot,\cdot]},\mathcal{L}^{*}_{[\cdot,\cdot]}-\mathcal{R}^{*}_{[\cdot,\cdot]}, A^{*} )$ are equivalent as representations of $(A,[\cdot,\cdot])$.
\end{lem}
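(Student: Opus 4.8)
The plan is to exhibit an explicit linear isomorphism $\varphi : A \to A^{*}$ intertwining the two representations, and the natural candidate is the one induced by the nondegenerate bilinear form, namely $\varphi(x) = \mathcal{B}(x, -)$, i.e. $\langle \varphi(x), y \rangle = \mathcal{B}(x,y)$ for all $x,y \in A$. Since $\mathcal{B}$ is nondegenerate, $\varphi$ is a linear isomorphism; since $\mathcal{B}$ is symmetric, we need not worry about left/right conventions. It then remains to check the two intertwining conditions from the definition of equivalence of representations, namely
\begin{equation*}
\varphi\big(\mathcal{L}_{[\cdot,\cdot]}(x)y\big) = \mathcal{L}^{*}_{[\cdot,\cdot]}(x)\varphi(y)
\quad\text{and}\quad
\varphi\big(\mathcal{R}_{[\cdot,\cdot]}(x)y\big) = \big(\mathcal{L}^{*}_{[\cdot,\cdot]} - \mathcal{R}^{*}_{[\cdot,\cdot]}\big)(x)\varphi(y),
\end{equation*}
for all $x,y \in A$.

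For the first identity, I would pair both sides against an arbitrary $z \in A$. On the left, $\langle \varphi([x,y]), z\rangle = \mathcal{B}([x,y],z)$. On the right, $\langle \mathcal{L}^{*}_{[\cdot,\cdot]}(x)\varphi(y), z\rangle = -\langle \varphi(y), \mathcal{L}_{[\cdot,\cdot]}(x)z\rangle = -\mathcal{B}(y,[x,z])$. These agree precisely by the invariance relation \eqref{eq:inv}, which is the symmetric reformulation of \eqref{eq:quad}. For the second identity, again pair against $z \in A$: the left side is $\langle \varphi([y,x]), z\rangle = \mathcal{B}([y,x],z)$, and using \eqref{eq:quad} this equals $\mathcal{B}(y, [z,x] - [x,z])$. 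The right side is
\begin{equation*}
\langle (\mathcal{L}^{*}_{[\cdot,\cdot]} - \mathcal{R}^{*}_{[\cdot,\cdot]})(x)\varphi(y), z\rangle
= -\langle \varphi(y), \mathcal{L}_{[\cdot,\cdot]}(x)z\rangle + \langle \varphi(y), \mathcal{R}_{[\cdot,\cdot]}(x)z\rangle
= -\mathcal{B}(y,[x,z]) + \mathcal{B}(y,[z,x]),
\end{equation*}
so the two sides match. Thus $\varphi$ is the required equivalence.

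I do not expect a serious obstacle here: the statement is essentially the standard fact that a nondegenerate invariant form identifies the adjoint and coadjoint representations, and the only care needed is bookkeeping with the sign conventions in the definitions of $\mathcal{L}^{*}$, $\mathcal{R}^{*}$ and of the coadjoint representation $(\mathcal{L}^{*}_{[\cdot,\cdot]}, \mathcal{L}^{*}_{[\cdot,\cdot]} - \mathcal{R}^{*}_{[\cdot,\cdot]}, A^{*})$ from Proposition~\ref{pro:dual rep}, together with the two equivalent forms \eqref{eq:quad} and \eqref{eq:inv} of invariance. The mild subtlety worth flagging is that one must use \emph{both} forms: \eqref{eq:inv} for the $l$-component and \eqref{eq:quad} for the $r$-component, which is why the remark recording \eqref{eq:inv} is placed just before the lemma.
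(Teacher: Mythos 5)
Your proposal is correct and coincides with the paper's own argument: the paper also takes the map $\mathcal{B}^{\natural}(x)=\mathcal{B}(x,\cdot)$ as the intertwiner, verifies the $\mathcal{L}$-component via \eqref{eq:inv} exactly as you do, and disposes of the $\mathcal{R}$-component "similarly, by \eqref{eq:quad}" — the computation you spell out. No discrepancies to report.
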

\begin{proof}
	We set a linear isomorphism $\mathcal{B}^{\natural}:A\rightarrow A^{*}$ using
	
	\vspace{-6pt}\begin{equation}\label{eq:bn}
		\langle\mathcal{B}^{\natural}(x),y\rangle=\mathcal{B}(x,y).
	\end{equation}
	
	Then, by \eqref{eq:inv} we have
	\begin{equation*}
		\langle\mathcal{B}^{\natural}\big(\mathcal{L}_{[\cdot,\cdot]}(x)y\big),z\rangle=\mathcal{B}([x, y],z)=-\mathcal{B}(y,[x,z])=-\langle\mathcal{B}^{\natural}(y),[x, z]\rangle=\langle\mathcal{L}^{*}_{[\cdot,\cdot]}(x)\mathcal{B}^{\natural}(y),z\rangle,
	\end{equation*}
	that is, $\mathcal{B}^{\natural}\big(\mathcal{L}_{[\cdot,\cdot]}(x)y\big)=\mathcal{L}^{*}_{[\cdot,\cdot]}(x)\mathcal{B}^{\natural}(y)$.
	Similarly, by \eqref{eq:quad}, we have $\mathcal{B}^{\natural}\big(\mathcal{R}_{[\cdot,\cdot]}(x)y\big)= \linebreak  (\mathcal{L}^{*}_{[\cdot,\cdot]}-\mathcal{R}^{*}_{[\cdot,\cdot]})(x)\mathcal{B}^{\natural}(y)$. Hence, the conclusion follows.
\end{proof}

\begin{pro}\label{pro:2.8}
	Let $(A,\cdot)$ be a commutative associative algebra and $f:A\rightarrow A$ be a linear map.
	Let $\mathcal{B}$ be a nondegenerate symmetric invariant bilinear form on $(A,\cdot)$
	and $\hat{f}:A\rightarrow A$ be the adjoint map of $f$ with respect to $\mathcal{B}$, given by
	
	\vspace{-6pt}\begin{equation*}
		\mathcal{B}\big(\hat{f}(x),y\big)=\mathcal{B}\big(x,f(y)\big),\;\forall x,y\in A.
	\end{equation*}
	
	Then, there is a quadratic left-Alia algebra $(A,[\cdot,\cdot],\mathcal{B})$, where $(A,[\cdot,\cdot])$ is the special left-Alia algebra with respect to $(A,\cdot,f,-\hat f)$, that is,
	
	\vspace{-6pt}\begin{equation}\label{eq:hat}
		[x, y]=x\cdot f(y)-\hat{f}(x\cdot y).
	\end{equation}
\end{pro}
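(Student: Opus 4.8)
The plan is to verify the two defining conditions of a quadratic left-Alia algebra directly: first, that $(A,[\cdot,\cdot])$ as defined by \eqref{eq:hat} is a left-Alia algebra, and second, that $\mathcal{B}$ is nondegenerate, symmetric, and invariant in the sense of \eqref{eq:quad}. For the first point, I would invoke the Remark following Theorem~\ref{thm:R-D}: the bracket $[x,y]=x\cdot f(y)-\hat f(x\cdot y)$ is exactly of the special left-Alia form $[x,y]=x\cdot f(y)+g(x\cdot y)$ with $g=-\hat f$, so by the result cited from \cite{Dzh09} it is automatically left-Alia. Nondegeneracy and symmetry of $\mathcal{B}$ are assumed. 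So the entire content of the proof is the invariance identity.

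To establish invariance, I would compute both sides of \eqref{eq:quad} using only the definition of $[\cdot,\cdot]$, the commutativity and associativity of $(A,\cdot)$, the invariance of $\mathcal{B}$ with respect to the associative product (i.e. $\mathcal{B}(x\cdot y,z)=\mathcal{B}(x,y\cdot z)$), and the defining property of the adjoint $\hat f$. Concretely, the left-hand side is
\[
\mathcal{B}([x,y],z)=\mathcal{B}\big(x\cdot f(y)-\hat f(x\cdot y),\,z\big)=\mathcal{B}\big(x\cdot f(y),z\big)-\mathcal{B}\big(x\cdot y,f(z)\big),
\]
where in the second term I moved $\hat f$ across $\mathcal{B}$ to turn it into $f$. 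For the right-hand side, I would expand $[z,y]-[y,z]=z\cdot f(y)-\hat f(z\cdot y)-y\cdot f(z)+\hat f(y\cdot z)=z\cdot f(y)-y\cdot f(z)$, where the two $\hat f$ terms cancel by commutativity of $\cdot$; hence
\[
\mathcal{B}\big(x,[z,y]-[y,z]\big)=\mathcal{B}\big(x,z\cdot f(y)\big)-\mathcal{B}\big(x,y\cdot f(z)\big).
\]
Now I would match the two expressions term by term: $\mathcal{B}(x\cdot f(y),z)=\mathcal{B}(x, f(y)\cdot z)=\mathcal{B}(x,z\cdot f(y))$ by associativity/invariance and commutativity, and similarly $\mathcal{B}(x\cdot y, f(z))=\mathcal{B}(x, y\cdot f(z))$. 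This gives exactly the required equality.

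I do not expect a serious obstacle here; the only thing to be careful about is bookkeeping the order of factors and the placement of $\hat f$ versus $f$ when sliding maps across $\mathcal{B}$, and making sure the cancellation of the $\hat f$-terms in $[z,y]-[y,z]$ is used correctly (this is precisely where commutativity of the associative product enters, and it is the structural reason the construction works). If one prefers not to cite the Remark after Theorem~\ref{thm:R-D} for the left-Alia property, one can alternatively quote Proposition~\ref{pro:322}-style direct verification, but invoking the special left-Alia construction from \cite{Dzh09} with $f=f$, $g=-\hat f$ is the cleanest route.
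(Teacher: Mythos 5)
Your proposal is correct and follows essentially the same route as the paper: the left-Alia property comes for free from the special left-Alia construction with $g=-\hat f$, and the invariance identity \eqref{eq:quad} is verified by sliding $\hat f$ across $\mathcal{B}$ to become $f$, using invariance of $\mathcal{B}$ on $(A,\cdot)$, and noting that the $\hat f$-terms in $[z,y]-[y,z]$ cancel by commutativity. The paper's proof is the same chain of equalities written more compactly (it re-inserts the cancelling $\hat f(z\cdot y)-\hat f(y\cdot z)$ at the end rather than cancelling them up front), so there is no substantive difference.
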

\begin{proof}
	For all $x,y,z\in A$, we have
	
	\vspace{-12pt}\begin{eqnarray*}
		\mathcal{B}([x,y],z)&=&\mathcal{B}\big(x\cdot f(y)-\hat{f}(x\cdot y),z\big)\\
		&=&\mathcal{B}\big( x, z\cdot f(y) -y\cdot f(z)\big)\\
		&=&\mathcal{B}\Big (x,   z\cdot f(y)-\hat{f}(z\cdot y)-\big( y\cdot f(z)-\hat{f}(y\cdot z)  \big)\Big)\\
		&=&\mathcal{B}(x,[z, y]-[y,z]).
	\end{eqnarray*}
	
	Hence, the conclusion follows.
\end{proof}


\begin{ex}\label{ex:2.9}
	Let $(A,[\cdot,\cdot])$ be a left-Alia algebra and $(\mathcal{L}_{[\cdot,\cdot]},\mathcal{R}_{[\cdot,\cdot]},A)$ be the adjoint representation of $(A,[\cdot,\cdot])$.
	By Propositions~\ref{pro:semi} and~\ref{pro:dual rep}, there is a left-Alia algebra $A\ltimes_{\mathcal{L}^{*}_{[\cdot,\cdot]}, \mathcal{L}^{*}_{[\cdot,\cdot]}-\mathcal{R}^{*}_{[\cdot,\cdot]}}A^{*}$ on $d=A\oplus A^*$, given by \eqref{eq:semi-d}.
	There is a natural nondegenerate symmetric bilinear form $\mathcal{B}_{d}$ on $A\oplus A^{*}$, given by
	\begin{equation}\label{eq:Bd}
		\mathcal{B}_{d}(x+a^{*},y+b^{*})=\langle x, b^{*}\rangle+\langle a^{*},y\rangle,\;\forall x,y\in A, a^{*},b^{*}\in A^{*}.
	\end{equation}
	
	For all $x,y,z\in A, a^{*},b^{*},c^{*}\in A^{*}$, we have
	\begin{eqnarray*}
		\mathcal{B}_{d} ([ x+a^{*} , y+b^{*} ]_d,z+c^{*})
		&=&\mathcal{B}_{d}\big([x, y]+\mathcal{L}^{*}_{[\cdot,\cdot]}(x)b^{*}+(\mathcal{L}^{*}_{[\cdot,\cdot]}-\mathcal{R}^{*}_{[\cdot,\cdot]})(y)a^{*},z+c^{*}\big)\\
		&=&\langle [x, y],c^{*}\rangle+\langle \mathcal{L}^{*}_{[\cdot,\cdot]}(x)b^{*}+(\mathcal{L}^{*}_{[\cdot,\cdot]}-\mathcal{R}^{*}_{[\cdot,\cdot]})(y)a^{*},z\rangle\\
		&=&\langle [x,y],c^{*}\rangle-\langle [x,z], b^{*}\rangle+\langle a^{*},[z,y]-[y,z]\rangle,\\
		\mathcal{B}_{d}\big(x+a^{*},[z+c^{*},x+b^{*}]_d\big)&=&\langle [z,y], a^{*}\rangle-\langle [z,x], b^{*}\rangle+\langle c^{*}, [y,x]-[x, y]\rangle,\\
		\mathcal{B}_{d}\big(x+a^{*},[y+b^{*},z+c^{*}]_d\big)&=&\langle [y,z], a^{*}\rangle-\langle [y,x], c^{*}\rangle+\langle b^{*}, [z,x]-[x,z]\rangle.
	\end{eqnarray*}
	
	Hence, we have
	\begin{equation*}
		\mathcal{B}_{d}\big( [x+a^{*},y+b^{*}]_d,z+c^{*} \big)=\mathcal{B}_{d}\big( x+a^{*}, [z+c^{*},y+b^{*}]_d- [y+b^{*},z+c^{*}]_d\big),
	\end{equation*}
	and, thus, $(A\ltimes_{\mathcal{L}^{*}_{[\cdot,\cdot]}, \mathcal{L}^{*}_{[\cdot,\cdot]}-\mathcal{R}^{*}_{[\cdot,\cdot]}}A^{*},\mathcal{B}_{d})$ is a quadratic left-Alia algebra.
\end{ex}

\begin{rmk}
	By Example~\ref{ex:2.9}, an arbitrary Lie algebra $(\mathfrak{g},[\cdot,\cdot])$ renders a quadratic left-Alia algebra $(\mathfrak{g}\ltimes_{\mathrm{ad}^{*},2\mathrm{ad}^{*}}\mathfrak{g}^{*},\mathcal{B}_{d})$, where $\mathrm{ad}:\mathfrak{g}\rightarrow\mathrm{End}(\mathfrak{g})$ is the adjoint representation of $(\mathfrak{g},[\cdot,\cdot])$.
\end{rmk}
We study the tensor forms of nondegenerate symmetric invariant bilinear forms on left-Alia algebras.

\begin{defi}
	Let $(A,[\cdot,\cdot])$ be a left-Alia algebra and
	$h:A\rightarrow \mathrm{End}(A\otimes A)$ be a linear map given by
	\begin{equation}
		h(x)=(\mathcal{R}_{[\cdot,\cdot]}-\mathcal{L}_{[\cdot,\cdot]})(x)\otimes\mathrm{id}-\mathrm{id}\otimes\mathcal{R}_{[\cdot,\cdot]}(x),
		\;\forall x\in A.
	\end{equation}
	
	An element $r\in A\otimes A$ is called \textbf{{invariant}} on $(A,[\cdot,\cdot])$ if $h(x)r=0$ for all $x\in A$.
\end{defi}

\delete{
	Let $V$ and $A$ be vector spaces. We identify a linear map $f:V\rightarrow A$ as an element $f_{\sharp}\in V^{*}\otimes A$ by
	\begin{equation}\label{eq:tensor form}
		\langle f_{\sharp}, u\otimes a^{*}\rangle=\langle f(u), a^{*}\rangle,\;\forall u\in V, a^{*}\in A^{*}.
	\end{equation}
	Now suppose $\mathcal{B}$ is a nondegenerate bilinear form on $A$ and $\mathcal{B}^{\natural}:A\rightarrow A^{*}$
	be the corresponding map given by \eqref{eq:bn}.
	We set $\widetilde{\mathcal{B}}=(\mathcal{B}^{\natural^{-1}})_{\sharp}\in A\otimes A$.
	Then we have the following result.}

\begin{pro}
	Let $(A,[\cdot,\cdot])$ be a left-Alia algebra.
	Suppose that $\mathcal{B}$ is a nondegenerate bilinear form on $A$ and $\mathcal{B}^{\natural}:A\rightarrow A^{*}$
	is the corresponding map given by \eqref{eq:bn}.
	Set $\widetilde{\mathcal{B}} \in A\otimes A$ using
	\begin{equation}
		\langle \widetilde{\mathcal{B}}, a^{*}\otimes b^{*}\rangle=\langle \mathcal{B}^{\natural^{-1}}(a^{*}), b^{*}\rangle,\;\forall a^{*},b^{*}\in A^{*}.
	\end{equation}
	
	Then, $(A,[\cdot,\cdot],\mathcal{B})$ is a quadratic left-Alia algebra if and only if $\widetilde{\mathcal{B}}$ is symmetric and invariant \linebreak  on $(A,[\cdot,\cdot])$.
\end{pro}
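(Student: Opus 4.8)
The plan is to transport everything through the linear isomorphism $\mathcal{B}^{\natural}\colon A\to A^{\ast}$ of \eqref{eq:bn}, exploiting the defining identity $\langle\widetilde{\mathcal{B}},\alpha^{\ast}\otimes\beta^{\ast}\rangle=\langle(\mathcal{B}^{\natural})^{-1}(\alpha^{\ast}),\beta^{\ast}\rangle$ together with the fact that, $\mathcal{B}$ being nondegenerate, every covector is uniquely of the form $\mathcal{B}^{\natural}(p)$ for some $p\in A$. First I would dispatch the symmetry statement: substituting $\alpha^{\ast}=\mathcal{B}^{\natural}(p)$, $\beta^{\ast}=\mathcal{B}^{\natural}(q)$ gives $\langle\widetilde{\mathcal{B}},\mathcal{B}^{\natural}(p)\otimes\mathcal{B}^{\natural}(q)\rangle=\langle p,\mathcal{B}^{\natural}(q)\rangle=\mathcal{B}(q,p)$, so the tensor $\widetilde{\mathcal{B}}$ is symmetric if and only if $\mathcal{B}(q,p)=\mathcal{B}(p,q)$ for all $p,q\in A$, i.e.\ if and only if $\mathcal{B}$ is symmetric.

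For the invariance statement I would compute $\langle h(x)\widetilde{\mathcal{B}},\mathcal{B}^{\natural}(p)\otimes\mathcal{B}^{\natural}(q)\rangle$ for arbitrary $x,p,q\in A$. Writing $\phi^{\top}\colon A^{\ast}\to A^{\ast}$ for the transpose of a linear map $\phi\colon A\to A$ (so $\langle\phi^{\top}\xi,z\rangle=\langle\xi,\phi z\rangle$; note this is the plain transpose, not the sign-twisted $l^{\ast}$ of the paper), the adjunctions $\langle(\phi\otimes\mathrm{id})r,\xi\otimes\eta\rangle=\langle r,\phi^{\top}\xi\otimes\eta\rangle$ and $\langle(\mathrm{id}\otimes\psi)r,\xi\otimes\eta\rangle=\langle r,\xi\otimes\psi^{\top}\eta\rangle$ move the two summands of $h(x)=(\mathcal{R}_{[\cdot,\cdot]}-\mathcal{L}_{[\cdot,\cdot]})(x)\otimes\mathrm{id}-\mathrm{id}\otimes\mathcal{R}_{[\cdot,\cdot]}(x)$ onto the covectors. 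Unwinding $\langle\widetilde{\mathcal{B}},-\otimes-\rangle$ through $(\mathcal{B}^{\natural})^{-1}$ and using $\langle\mathcal{B}^{\natural}(z),w\rangle=\mathcal{B}(z,w)$ together with $\mathcal{R}_{[\cdot,\cdot]}(x)z=[z,x]$ and $\mathcal{L}_{[\cdot,\cdot]}(x)z=[x,z]$, the first term becomes $\mathcal{B}(p,[q,x]-[x,q])$ once symmetry of $\mathcal{B}$ (established in the previous step) is invoked, and the second becomes $\mathcal{B}(q,[p,x])=\mathcal{B}([p,x],q)$. Hence
\[
\langle h(x)\widetilde{\mathcal{B}},\mathcal{B}^{\natural}(p)\otimes\mathcal{B}^{\natural}(q)\rangle=\mathcal{B}\big(p,[q,x]-[x,q]\big)-\mathcal{B}\big([p,x],q\big),
\]
which is precisely the difference of the two sides of \eqref{eq:quad} under the relabelling $(x,y,z)\mapsto(p,x,q)$. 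Since $\mathcal{B}^{\natural}$ is bijective, the simple tensors $\mathcal{B}^{\natural}(p)\otimes\mathcal{B}^{\natural}(q)$ exhaust all simple tensors in $A^{\ast}\otimes A^{\ast}$, so $h(x)\widetilde{\mathcal{B}}=0$ for all $x\in A$ iff the displayed quantity vanishes for all $x,p,q\in A$ iff \eqref{eq:quad} holds. Combining this with the symmetry equivalence yields the proposition.

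The argument is routine linear algebra; the only points that need care are the bookkeeping of the transpose/dual maps (and keeping $\phi^{\top}$ distinct from the paper's $l^{\ast}$), and the fact that symmetry of $\mathcal{B}$ is genuinely used to collapse the two-variable expression above into exactly \eqref{eq:quad}, which is why I would prove the symmetry equivalence first and feed it into the invariance step. If desired, everything can be cross-checked in a basis $\{e_i\}$ with dual basis $\{e_i^{\ast}\}$, where $\widetilde{\mathcal{B}}=\sum_{i,j}B^{ij}\,e_i\otimes e_j$ with $(B^{ij})$ the inverse of the Gram matrix $(\mathcal{B}(e_i,e_j))$, so that both equivalences reduce to identities relating this matrix to the structure constants.
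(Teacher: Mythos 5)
Your proof is correct and takes essentially the same route as the paper's: both transport everything through $\mathcal{B}^{\natural}$, dispose of the symmetry equivalence first, and then identify the pairing of $h(x)\widetilde{\mathcal{B}}$ against $\mathcal{B}^{\natural}(p)\otimes\mathcal{B}^{\natural}(q)$ with the difference of the two sides of \eqref{eq:quad}. The only cosmetic difference is that the paper carries out the dualization using the sign-twisted maps $\mathcal{L}^{*}_{[\cdot,\cdot]},\mathcal{R}^{*}_{[\cdot,\cdot]}$ rather than plain transposes.
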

\begin{proof}
	It is clear that $\mathcal{B}$ is symmetric if and only if $ \widetilde{\mathcal{B}} $ is symmetric.
	Let $x,y,z\in A$ and $a^{*}=\mathcal{B}^{\natural}(x), c^{*}=\mathcal{B}^{\natural}(z)$.
	Under the symmetric assumption, we have
	{\small
		\begin{eqnarray*}
			&&\mathcal{B}([x,y],z)=\langle [x,y],\mathcal{B}^{\natural}(z)\rangle=\langle [\mathcal{B}^{\natural^{-1}}(a^{*}), y], c^{*}\rangle\\
			&&=-\langle \mathcal{B}^{\natural^{-1}}(a^{*}), \mathcal{R}^{*}_{[\cdot,\cdot]}(y)c^{*}\rangle=-\langle \widetilde{\mathcal{B}}, a^{*}\otimes\mathcal{R}^{*}_{[\cdot,\cdot]}(y)c^{*}\rangle=\langle \big(\mathrm{id}\otimes\mathcal{R}_{[\cdot,\cdot]}(y)\big)\widetilde{\mathcal{B}}, a^{*}\otimes c^{*}\rangle,\\
			&&\mathcal{B}(x,[z,y]-[y,z])=\langle \mathcal{B}^{\natural}(x), [z,y]-[y,z]\rangle=\langle a^{*},[\mathcal{B}^{\natural^{-1}}(c^{*}),y]-[y,\mathcal{B}^{\natural^{-1}}(c^{*})] \rangle\\
			&&=\langle(\mathcal{L}^{*}_{[\cdot,\cdot]}-\mathcal{R}^{*}_{[\cdot,\cdot]})(y)a^{*}, \mathcal{B}^{\natural^{-1}}(c^{*})\rangle
			=\langle \widetilde{\mathcal{B}}, c^{*}\otimes (\mathcal{L}^{*}_{[\cdot,\cdot]}-\mathcal{R}^{*}_{[\cdot,\cdot]})(y)a^{*}\rangle\\
			&&= \langle \big( (\mathcal{R}_{[\cdot,\cdot]}-\mathcal{L}_{[\cdot,\cdot]})(y)\otimes\mathrm{id} \big) \widetilde{\mathcal{B}}, a^{*}\otimes c^{*}\rangle,
	\end{eqnarray*}}
	that is, \eqref{eq:quad} holds if and only if $h(y)\widetilde{\mathcal{B}}=0$ for all $y\in A$.
	Hence, the conclusion follows.
\end{proof}

\section{Manin Triples of Left-Alia Algebras and Left-Alia Bialgebras}

In this section, we introduce the notions of Manin triples of left-Alia algebras and left-Alia bialgebras.
We show that they are equivalent structures via specific matched pairs of left-Alia algebras.

\subsection{Manin Triples of Left-Alia Algebras}

\vspace{6pt}
\begin{defi}\label{Manin}
	Let $(A,[\cdot,\cdot]_A )$ and $(A^{*},[\cdot,\cdot]_{A^{*}} )$ be left-Alia algebras.
	Assume  that there is a left-Alia algebra structure $(d=A \oplus A^{*},[\cdot,\cdot]_d )$ on $A\oplus A^{*}$  which contains $(A,[\cdot,\cdot]_A )$ and $(A^{*},[\cdot,\cdot]_{A^{*}} )$ as left-Alia subalgebras.
	Suppose that  the natural nondegenerate symmetric bilinear form $\mathcal{B}_{d}$, given by \eqref{eq:Bd}, is invariant on $(A \oplus A^{*},[\cdot,\cdot]_d )$, that is, $(A \oplus A^{*},[\cdot,\cdot]_{d},\mathcal{B}_{d} )$ is a quadratic left-Alia algebra.
	Then, we say that $\big( (A\oplus A^{*},[\cdot,\cdot]_{d},\mathcal{B}_{d}), A, A^{*} \big)$ is a \textbf{{Manin triple of left-Alia algebras}}.
\end{defi}

Recall~\cite{Bai2010} that a \textbf{{double construction of commutative Frobenius algebras}} \linebreak  $\big( (A\oplus A^{*},\cdot_{d}$,
$\mathcal{B}_{d}),A, A^{*}    \big)$ is a commutative associative algebra $(A\oplus A^{*},\cdot_{d})$ containing $(A,\cdot_{A})$ and $(A^{*},\cdot_{A^{*}})$ as commutative associative subalgebras, such that the natural nondegenerate symmetric bilinear form $\mathcal{B}_{d}$ given by \eqref{eq:Bd} is invariant on $(A\oplus A^{*},\cdot_{d})$.
Now, we show that double constructions of commutative Frobenius algebras with  linear maps naturally give rise to Manin triples of left-Alia algebras.

\begin{cor}
	Let $\big( (A\oplus A^{*},\cdot_{d},\mathcal{B}_{d}), A, A^{*} \big)$ be a double construction of commutative Frobenius algebras.
	Suppose that $P:A\rightarrow A$ and $Q^{*}:A^{*}\rightarrow A^{*}$ are linear maps.
	Then, there is a Manin triple of left-Alia algebras
	$\big( (A\oplus A^{*},[\cdot,\cdot]_{d},\mathcal{B}_{d}),  A ,  A^{*} \big)$ given by
	\begin{eqnarray*}
		&&[x+a^{*},y+b^{*}]_{d}=(x+a^{*})\cdot_{d}\big(P(y)+Q^{*}(b^{*})\big)-(Q+P^{*})\big( (x+a^{*})\cdot_{d}(y+b^{*}) \big),\\
		&&[x,y]_{A}=x\cdot_{A}P(y)-Q(x\cdot_{A}y),\; [a^{*},b^{*}]_{A^{*}}=a^{*}\cdot_{A^{*}}Q^{*}(b^{*})-P^{*}(a^{*}\cdot_{A^{*}}b^{*}),
	\end{eqnarray*}
	for all $x,y\in A, a^{*},b^{*}\in A^{*}$.
\end{cor}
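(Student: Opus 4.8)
The plan is to verify directly that the data in the statement fits into the framework already established, namely Proposition~\ref{pro:2.8} together with Definition~\ref{Manin}. First I would show that $(A\oplus A^{*},\cdot_d, \mathcal{B}_d)$ carries the structure needed: by hypothesis it is a double construction of commutative Frobenius algebras, so $(A\oplus A^{*},\cdot_d)$ is a commutative associative algebra and $\mathcal{B}_d$ is a nondegenerate symmetric invariant bilinear form on it. The next step is to identify the linear maps. Set $f = P + Q^{*} : A\oplus A^{*}\rightarrow A\oplus A^{*}$, viewing $P$ and $Q^{*}$ as acting on the respective summands and extending by zero elsewhere. I would then check that the $\mathcal{B}_d$-adjoint of $f$ is $\hat f = Q + P^{*}$: this amounts to the identities $\mathcal{B}_d(\hat P(a^{*}), y) = \mathcal{B}_d(a^{*}, P(y))$ and $\mathcal{B}_d(\widehat{Q^{*}}(x), b^{*}) = \mathcal{B}_d(x, Q^{*}(b^{*}))$, which follow because $\mathcal{B}_d$ pairs $A$ with $A^{*}$, so the adjoint of $P$ (acting on $A$) with respect to $\mathcal{B}_d$ is exactly $P^{*}$ (acting on $A^{*}$), and symmetrically for $Q^{*}$. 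Hence $\widehat{P+Q^{*}} = P^{*} + Q = Q + P^{*}$.

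Given this, Proposition~\ref{pro:2.8} applies verbatim to the commutative associative algebra $(A\oplus A^{*},\cdot_d)$ with the map $f = P + Q^{*}$ and the form $\mathcal{B}_d$: it produces a quadratic left-Alia algebra $(A\oplus A^{*},[\cdot,\cdot]_d, \mathcal{B}_d)$ whose bracket is
\[
[z, w]_d = z\cdot_d f(w) - \hat f(z\cdot_d w) = (x+a^{*})\cdot_d\big(P(y)+Q^{*}(b^{*})\big) - (Q+P^{*})\big((x+a^{*})\cdot_d(y+b^{*})\big)
\]
for $z = x+a^{*}$, $w = y+b^{*}$, which is precisely the displayed formula for $[\cdot,\cdot]_d$. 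Next I would check that $(A,\cdot_A)$ and $(A^{*},\cdot_{A^{*}})$ are subalgebras closed under $[\cdot,\cdot]_d$ and that the induced brackets are the stated $[\cdot,\cdot]_A$ and $[\cdot,\cdot]_{A^{*}}$: since $A$ is a subalgebra of $(A\oplus A^{*},\cdot_d)$ and $P$ preserves $A$ while $Q+P^{*}$ restricted to $A$ equals $Q$ (the $P^{*}$ part lands in $A^{*}$ and kills $A$... more precisely one must note $x\cdot_A y \in A$ so only the $Q$ component of $Q+P^{*}$ contributes a term in $A$, but one should check the $P^{*}$ term vanishes on $A$ — indeed $P^{*}$ acts on $A^{*}$ and by zero on $A$), we get $[x,y]_d = x\cdot_A P(y) - Q(x\cdot_A y) = [x,y]_A \in A$. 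The symmetric computation on $A^{*}$ gives $[a^{*},b^{*}]_d = a^{*}\cdot_{A^{*}}Q^{*}(b^{*}) - P^{*}(a^{*}\cdot_{A^{*}}b^{*}) = [a^{*},b^{*}]_{A^{*}} \in A^{*}$. By Proposition~\ref{pro:2.8} (or Theorem~\ref{thm:R-D}'s remark on special left-Alia algebras), $(A,[\cdot,\cdot]_A)$ and $(A^{*},[\cdot,\cdot]_{A^{*}})$ are themselves left-Alia algebras.

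Finally, assembling these facts: $(A\oplus A^{*},[\cdot,\cdot]_d,\mathcal{B}_d)$ is a quadratic left-Alia algebra containing $(A,[\cdot,\cdot]_A)$ and $(A^{*},[\cdot,\cdot]_{A^{*}})$ as left-Alia subalgebras, and $\mathcal{B}_d$ is the natural form of Equation~\eqref{eq:Bd}; by Definition~\ref{Manin} this is exactly a Manin triple of left-Alia algebras $\big((A\oplus A^{*},[\cdot,\cdot]_d,\mathcal{B}_d), A, A^{*}\big)$. I expect the main obstacle to be purely bookkeeping: namely, being careful about how $P$, $Q$, $P^{*}$, $Q^{*}$ are extended by zero off their natural summands and verifying that the adjoint relation $\hat f = Q + P^{*}$ holds with the correct signs and on the correct summands; once the adjoint identity is pinned down, everything else is an immediate invocation of Proposition~\ref{pro:2.8}. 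One subtlety worth a line of care is checking that the restriction of $[\cdot,\cdot]_d$ to each factor really is closed (i.e., has no cross terms), which uses both that $\cdot_d$ restricts to $\cdot_A$ and $\cdot_{A^{*}}$ and that $P, Q, P^{*}, Q^{*}$ respect the decomposition.
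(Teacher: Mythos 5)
Your proposal is correct and follows essentially the same route as the paper: the paper's own proof consists precisely of observing that the $\mathcal{B}_d$-adjoint of $f=P+Q^{*}$ is $Q+P^{*}$ and then invoking Proposition~\ref{pro:2.8}. You simply make explicit the bookkeeping the paper leaves implicit (the duality pairing computation for the adjoint and the closure of $A$ and $A^{*}$ under the induced bracket), which is sound.
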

\begin{proof}
	The adjoint map of $P+Q^{*}$ with respect to $\mathcal{B}_{d}$ is $Q+P^{*}$.
	Hence, the conclusion follows from Proposition~\ref{pro:2.8} by taking $f=P+Q^{*}$.
\end{proof}

\begin{thm}\label{thm:1}
	Let $(A,[\cdot,\cdot]_A )$ and $(A^{*},[\cdot,\cdot]_{A^{*}} )$ be left-Alia algebras.
	Then, there is a Manin triple of left-Alia algebras $\big( (A\oplus A^{*},[\cdot,\cdot]_{d},\mathcal{B}_{d}), A, A^{*} \big)$ if and only if
	$$\big ( (A,[\cdot,\cdot]_A ), (A^{*},[\cdot,\cdot]_{A^{*}} ), \mathcal{L}_{[\cdot,\cdot]_A}^*, \mathcal{L}_{[\cdot,\cdot]_A}^*-\mathcal{R}^{*}_{[\cdot,\cdot]_A},
	\mathcal{L}^{*}_{[\cdot,\cdot]_{A^*}}, \mathcal{L}^{*}_{[\cdot,\cdot]_{A^*}}-\mathcal{R}^{*}_{[\cdot,\cdot]_{A^*}}
	\big)$$
	is a matched pair of left-Alia algebras.
\end{thm}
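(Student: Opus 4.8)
The plan is to reduce the statement to Definition~\ref{Manin} together with Proposition~\ref{pro:322}, using the characterization of quadratic left-Alia algebras via matched pairs. First I would observe that a left-Alia algebra structure $[\cdot,\cdot]_d$ on $d=A\oplus A^*$ containing $(A,[\cdot,\cdot]_A)$ and $(A^*,[\cdot,\cdot]_{A^*})$ as subalgebras is, by Definition~\ref{match}, exactly the data of a matched pair $\big((A,[\cdot,\cdot]_A),(A^*,[\cdot,\cdot]_{A^*}),l_A,r_A,l_B,r_B\big)$ with $l_A,r_A:A\to\mathrm{End}(A^*)$ and $l_B,r_B:A^*\to\mathrm{End}(A)$, where the bracket on $d$ reads
\begin{equation*}
[x+a,y+b]_d=[x,y]_A+l_B(a)y+r_B(b)x+[a,b]_{A^*}+l_A(x)b+r_A(y)a.
\end{equation*}
So the content of the theorem is: the extra requirement that $\mathcal{B}_d$ be invariant on $(d,[\cdot,\cdot]_d)$ forces $l_A=\mathcal{L}^*_{[\cdot,\cdot]_A}$, $r_A=\mathcal{L}^*_{[\cdot,\cdot]_A}-\mathcal{R}^*_{[\cdot,\cdot]_A}$, $l_B=\mathcal{L}^*_{[\cdot,\cdot]_{A^*}}$, $r_B=\mathcal{L}^*_{[\cdot,\cdot]_{A^*}}-\mathcal{R}^*_{[\cdot,\cdot]_{A^*}}$, and conversely these choices make $\mathcal{B}_d$ invariant.

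For the forward direction I would expand the invariance condition $\mathcal{B}_d([u_1,u_2]_d,u_3)=\mathcal{B}_d(u_1,[u_3,u_2]_d-[u_2,u_3]_d)$ for $u_i$ ranging over $A$ and $A^*$ separately, exactly as in the computation carried out in Example~\ref{ex:2.9}. Taking $u_1=x\in A$, $u_2=y\in A$, $u_3=c^*\in A^*$ isolates a relation pairing $r_A(y)$ against $\mathcal{R}_{[\cdot,\cdot]_A}(y)$ and $l_A$ against the bracket on $A$; cycling the roles and using nondegeneracy of the pairing between $A$ and $A^*$ then pins down $l_A$ and $r_A$ as the coadjoint-type maps $\mathcal{L}^*_{[\cdot,\cdot]_A}$ and $\mathcal{L}^*_{[\cdot,\cdot]_A}-\mathcal{R}^*_{[\cdot,\cdot]_A}$. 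By the symmetry $A\leftrightarrow A^*$ in the definition of $\mathcal{B}_d$, the same argument with the roles of $A$ and $A^*$ exchanged yields the formulas for $l_B$ and $r_B$. At this point Proposition~\ref{pro:dual rep} guarantees these are genuine representations, and since $(d,[\cdot,\cdot]_d)$ is by hypothesis a left-Alia algebra, Proposition~\ref{pro:322} tells us the matched-pair compatibility equations \eqref{eq:defi:matched pairs1}--\eqref{eq:defi:matched pairs2} automatically hold; hence we have the asserted matched pair.

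Conversely, suppose the given sextuple is a matched pair. By Proposition~\ref{pro:322} this produces a left-Alia algebra $(A\oplus A^*,[\cdot,\cdot]_d)$ containing $A$ and $A^*$ as subalgebras with the bracket written above, now with $l_A,r_A,l_B,r_B$ the specified coadjoint maps. It remains to check that $\mathcal{B}_d$ is invariant for this bracket; but this is precisely the computation of Example~\ref{ex:2.9} (there done for $A^*$ with zero multiplication), now carried through with the nonzero bracket $[\cdot,\cdot]_{A^*}$ on $A^*$ — the three displayed expressions for $\mathcal{B}_d([u_1,u_2]_d,u_3)$, $\mathcal{B}_d(u_1,[u_3,u_2]_d)$, $\mathcal{B}_d(u_1,[u_2,u_3]_d)$ combine, using the defining property $\langle l^*(x)u,v\rangle=-\langle u,l(x)v\rangle$ of dualized maps, to give \eqref{eq:quad}. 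Thus $(A\oplus A^*,[\cdot,\cdot]_d,\mathcal{B}_d)$ is a quadratic left-Alia algebra, and by Definition~\ref{Manin} we obtain the Manin triple.

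The main obstacle is the forward direction: one must be careful that expanding invariance on mixed arguments (one in $A$, one in $A^*$, one in $A$, and all permutations) really does separate into independent scalar identities that, after invoking nondegeneracy, determine each of $l_A,r_A,l_B,r_B$ uniquely rather than merely constraining them. Organizing the bookkeeping so that each of the four maps is read off from the appropriate choice of arguments — and double-checking the sign conventions coming from $l^*$ versus $\mathcal{R}^*_{[\cdot,\cdot]}$ — is where the care is needed; the verification that the matched-pair axioms then hold for free is immediate from Propositions~\ref{pro:dual rep} and~\ref{pro:322}.
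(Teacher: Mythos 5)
Your proposal is correct and follows essentially the same route as the paper: in the forward direction the paper likewise expands the invariance of $\mathcal{B}_{d}$ on mixed arguments and uses nondegeneracy to read off $[x,b^{*}]_{d}=(\mathcal{L}^{*}_{[\cdot,\cdot]_{A^*}}-\mathcal{R}^{*}_{[\cdot,\cdot]_{A^*}})(b^{*})x+\mathcal{L}^{*}_{[\cdot,\cdot]_A}(x)b^{*}$ (and symmetrically for $[y,a^{*}]_{d}$), which identifies the four structure maps as the coadjoint-type ones, and the converse is the direct invariance check you describe. Your explicit appeal to Proposition~\ref{pro:322} to get the matched-pair compatibility conditions for free is only a slightly more verbose packaging of the paper's concluding step.
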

\begin{proof}
	Let $\big( (A\oplus A^{*},[\cdot,\cdot]_d,\mathcal{B}_{d}), A, A^{*} \big)$ be a Manin triple of left-Alia algebras.
	For all $x,y\in A, a^{*},b^{*}\in A^{*}$, we have
	\begin{eqnarray*}
		\mathcal{B}_{d}([x, b^{*}]_d,y)&\overset{\eqref{eq:quad}}{=}&-\mathcal{B}(b^{*}, [x,y]_A)
		=-\langle b^{*}, [x,y]_A\rangle
		=\langle \mathcal{L}^{*}_{[\cdot,\cdot]_A}(x)b^{*}, y\rangle
		=\mathcal{B}_{d}\big(\mathcal{L}^{*}_{[\cdot,\cdot]_A}(x)b^{*},y\big),\\
		\mathcal{B}_{d}([x, b^{*}]_d,a^{*})&\overset{\eqref{eq:quad}}{=}&\mathcal{B}_{d}(x, [a^{*}, b^{*}]_{A^*}-[b^{*},a^{*}]_{A^*})
		=\langle x, [a^{*},b^{*}]_{A^*}-[b^{*},a^{*}]_{A^*}\rangle \\
		&=&\langle (\mathcal{L}^{*}_{[\cdot,\cdot]_{A^*}}-\mathcal{R}^{*}_{[\cdot,\cdot]_{A^*}})(b^{*})x, a^{*}\rangle=\mathcal{B}_{d}\big( (\mathcal{L}^{*}_{[\cdot,\cdot]_{A^*}}-\mathcal{R}^{*}_{[\cdot,\cdot]_{A^*}})(b^{*})x, a^{*}  \big).
	\end{eqnarray*}

	Thus,
	\begin{equation*}
		\mathcal{B}_{d}(  [x,b^{*}]_d, y+a^{*} )=\mathcal{B}_{d}\big(  (\mathcal{L}^{*}_{[\cdot,\cdot]_{A^*}}-\mathcal{R}^{*}_{[\cdot,\cdot]_{A^*}})(b^{*})x+\mathcal{L}^{*}_{[\cdot,\cdot]_A}(x)b^{*} , y+a^{*} \big)
	\end{equation*}
	and, by the nondegeneracy of $\mathcal{B}_{d}$, we have
	\begin{equation*}
		[x, b^{*}]_d
		=(\mathcal{L}^{*}_{[\cdot,\cdot]_{A^*}}-\mathcal{R}^{*}_{[\cdot,\cdot]_{A^*}})(b^{*})x+\mathcal{L}^{*}_{[\cdot,\cdot]_A}(x)b^{*}.
	\end{equation*}
	
	Similarly,
	\begin{equation*}
		[y,a^{*}]_d=(\mathcal{L}^{*}_{[\cdot,\cdot]_{A }}-\mathcal{R}^{*}_{[\cdot,\cdot]_{A }})(y)a^{*}+\mathcal{L}^{*}_{[\cdot,\cdot]_{A^*}}(a^{*})y.
	\end{equation*}
	
	Therefore, we have
	\begin{align}
		[x+a^{*},y+b^{*}]_d&=[x, y]_A+\mathcal{L}^{*}_{[\cdot,\cdot]_{A^*}}(a^{*})y+(\mathcal{L}^{*}_{[\cdot,\cdot]_{A^*}}-\mathcal{R}^{*}_{[\cdot,\cdot]_{A^*}})(b^{*})x\nonumber\\
		&+[a^{*}, b^{*}]_{A^*}+\mathcal{L}^{*}_{[\cdot,\cdot]_A}(x)b^{*}+(\mathcal{L}^{*}_{[\cdot,\cdot]_{A }}-\mathcal{R}^{*}_{[\cdot,\cdot]_{A }})(y)a^{*}.\label{eq:mp dual rep}
	\end{align}
	
	Hence, $\big ( (A,[\cdot,\cdot]_A ), (A^{*},[\cdot,\cdot]_{A^*} ), \mathcal{L}^{*}_{[\cdot,\cdot]_A}, \mathcal{L}^{*}_{[\cdot,\cdot]_A}-\mathcal{R}^{*}_{[\cdot,\cdot]_A},
	\mathcal{L}^{*}_{[\cdot,\cdot]_{A^*}}, \mathcal{L}^{*}_{[\cdot,\cdot]_{A^*}}-\mathcal{R}^{*}_{[\cdot,\cdot]_{A^*}}
	\big)$ is a \linebreak matched pair of left-Alia algebras.
	
	Conversely, if $\big ( (A,[\cdot,\cdot]_A ), (A^{*},[\cdot,\cdot]_{A^*} ), \mathcal{L}^{*}_{[\cdot,\cdot]_A}, \mathcal{L}^{*}_{[\cdot,\cdot]_A}-\mathcal{R}^{*}_{[\cdot,\cdot]_A},
	\mathcal{L}^{*}_{[\cdot,\cdot]_{A^*}}, \mathcal{L}^{*}_{[\cdot,\cdot]_{A^*}}-\mathcal{R}^{*}_{[\cdot,\cdot]_{A^*}}
	\big)$ is a matched pair of left-Alia algebras, then it is straightforward to check that  $\mathcal{B}_{d}$ is invariant on the left-Alia algebra $(A\oplus A^{*},[\cdot,\cdot]_{d})$ given by \eqref{eq:mp dual rep}.
\end{proof}

\subsection{Left-Alia Bialgebras}

\vspace{6pt}
\begin{defi}\label{defi:anti-pre-Lie coalgebras}
	A  \textbf{{left-Alia coalgebra}} is a pair, $(A,\delta)$, such that $A$ is a vector space and \linebreak  $\delta:A\rightarrow A\otimes A$ is a co-multiplication satisfying
	\begin{equation}\label{eq:defi:coalgebra}
		(\mathrm{id}^{\otimes 3}+\xi+\xi^{2})(\tau\otimes \mathrm{id}-\mathrm{id}^{\otimes 3})(\delta\otimes \mathrm{id})\delta=0,
	\end{equation}
	where $\tau(x\otimes y)=y\otimes x$ and $\xi(x\otimes y\otimes z)=y\otimes z\otimes x$ for all $x,y,z\in A$.
\end{defi}
\begin{pro}\label{pro:coalgebras}
	Let $A$ be a vector space and $\delta:A\rightarrow A\otimes A$ be a co-multiplication.
	Let $[\cdot,\cdot]_{A^*} :A^{*}\otimes A^{*}\rightarrow A^{*}$ be the linear dual of $\delta$, that is,
	\begin{equation}
		\langle [a^{*}, b^{*}]_{A^*},x\rangle=\langle\delta^{*}(a^{*}\otimes b^{*}),x\rangle=\langle a^{*}\otimes b^{*},\delta(x)\rangle, \;\;\forall a^{*},b^{*}\in A^{*}, x\in A.
	\end{equation}
	
	Then, $(A,\delta)$ is a left-Alia coalgebra if and only if $(A^{*},[\cdot,\cdot]_{A^{*}} )$ is a left-Alia algebra.
\end{pro}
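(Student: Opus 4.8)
The plan is to prove the statement by directly dualizing the symmetric Jacobi identity \eqref{eq:0-Alia}: I will show that, for the bracket $[\cdot,\cdot]_{A^{*}}$ obtained as the linear dual of $\delta$, identity \eqref{eq:0-Alia} on $A^{*}$ is, term by term and after pairing with $A$, exactly identity \eqref{eq:defi:coalgebra} for $\delta$. Since $A$ is finite-dimensional, the pairing between $A^{*}\otimes A^{*}\otimes A^{*}$ and $A^{\otimes 3}$ is nondegenerate and $A^{**}=A$, so every step below is an equivalence and the two implications are obtained simultaneously.

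First I would record the iterated-dualization identity. Writing $\delta(x)=\sum x_{(1)}\otimes x_{(2)}$ in Sweedler notation and applying the defining relation $\langle[a^{*},b^{*}]_{A^{*}},x\rangle=\langle a^{*}\otimes b^{*},\delta(x)\rangle$ twice, one obtains
\begin{equation*}
\langle[[a^{*},b^{*}]_{A^{*}},c^{*}]_{A^{*}},x\rangle=\langle a^{*}\otimes b^{*}\otimes c^{*},(\delta\otimes\mathrm{id})\delta(x)\rangle,\qquad\forall\, a^{*},b^{*},c^{*}\in A^{*},\ x\in A.
\end{equation*}
Interchanging $a^{*}$ and $b^{*}$ in the inner bracket transports the flip onto the first two tensor legs, so $\langle[[b^{*},a^{*}]_{A^{*}},c^{*}]_{A^{*}},x\rangle=\langle a^{*}\otimes b^{*}\otimes c^{*},(\tau\otimes\mathrm{id})(\delta\otimes\mathrm{id})\delta(x)\rangle$; hence the difference $[[b^{*},a^{*}]_{A^{*}},c^{*}]_{A^{*}}-[[a^{*},b^{*}]_{A^{*}},c^{*}]_{A^{*}}$ dualizes to $(\tau\otimes\mathrm{id}-\mathrm{id}^{\otimes 3})(\delta\otimes\mathrm{id})\delta$, which matches the sign convention in \eqref{eq:defi:coalgebra}.

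Next I would use that \eqref{eq:0-Alia} for $[\cdot,\cdot]_{A^{*}}$ is equivalent to $\circlearrowleft_{a^{*},b^{*},c^{*}}\big([[b^{*},a^{*}]_{A^{*}},c^{*}]_{A^{*}}-[[a^{*},b^{*}]_{A^{*}},c^{*}]_{A^{*}}\big)=0$ (the rewriting of \eqref{eq:0-Alia} already used in \eqref{eq:c-J}, up to overall sign), together with the fact that cyclically permuting $(a^{*},b^{*},c^{*})$ in a pairing $\langle a^{*}\otimes b^{*}\otimes c^{*},w\rangle$ with $w\in A^{\otimes 3}$ amounts to applying $\xi$ and $\xi^{2}$ to $w$. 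Combining the two, the cyclic sum dualizes to $(\mathrm{id}^{\otimes 3}+\xi+\xi^{2})(\tau\otimes\mathrm{id}-\mathrm{id}^{\otimes 3})(\delta\otimes\mathrm{id})\delta$ evaluated on $x$. By nondegeneracy of the pairing, the vanishing of this cyclic sum for all $a^{*},b^{*},c^{*}\in A^{*}$ and $x\in A$ is equivalent to $(\mathrm{id}^{\otimes 3}+\xi+\xi^{2})(\tau\otimes\mathrm{id}-\mathrm{id}^{\otimes 3})(\delta\otimes\mathrm{id})\delta=0$, which is precisely \eqref{eq:defi:coalgebra}; this settles both directions at once.

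I expect the only real obstacle to be the permutation bookkeeping: one must check carefully that the cyclic permutation $a^{*}\mapsto b^{*}\mapsto c^{*}\mapsto a^{*}$ on the $A^{*}$-side corresponds exactly to $\xi$ (and $\xi^{2}$) as defined in Definition \ref{defi:anti-pre-Lie coalgebras}, and that the inner swap produces $\tau\otimes\mathrm{id}$ rather than $\mathrm{id}\otimes\tau$. Getting these conventions aligned is what makes \eqref{eq:defi:coalgebra} come out verbatim; no deeper idea is needed.
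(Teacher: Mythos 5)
Your proposal is correct and follows essentially the same route as the paper's proof: dualize each difference $[[b^{*},a^{*}]_{A^{*}},c^{*}]_{A^{*}}-[[a^{*},b^{*}]_{A^{*}},c^{*}]_{A^{*}}$ to $(\tau\otimes\mathrm{id}-\mathrm{id}^{\otimes 3})(\delta\otimes\mathrm{id})\delta$, identify the cyclic permutations of $(a^{*},b^{*},c^{*})$ with $\xi$ and $\xi^{2}$, and conclude by nondegeneracy of the pairing that the cyclic sum vanishes iff \eqref{eq:defi:coalgebra} holds. The only cosmetic difference is an overall sign convention ($\tau\otimes\mathrm{id}-\mathrm{id}^{\otimes 3}$ versus $\mathrm{id}^{\otimes 3}-\tau\otimes\mathrm{id}$), which is immaterial since the condition is the vanishing of the expression.
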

\begin{proof}
	For all $x\in A, a^{*},b^{*},c^{*}\in A^{*}$,   we have
\begin{eqnarray*}
		\langle [[a^{*}, b^{*}]_{A^*},c^{*}]_{A^*}-[[b^{*},a^{*}]_{A^*},c^{*}]_{A^*},x\rangle&=&
		\langle \delta^{*}(\delta^{*}\otimes\mathrm{id})(\mathrm{id}^{\otimes 3}-\tau\otimes\mathrm{id})a^{*}\otimes b^{*}\otimes c^{*},x\rangle\\
		&=&\langle a^{*}\otimes b^{*}\otimes c^{*},(\mathrm{id}^{\otimes 3}-\tau\otimes\mathrm{id})(\delta\otimes\mathrm{id})\delta(x)\rangle,\\
		\langle [[b^{*},c^{*}]_{A^*},a^{*}]_{A^*}-[[c^{*}, b^{*}]_{A^*}, a^{*}]_{A^*},x\rangle&=&
		\langle b^{*}\otimes c^{*}\otimes a^{*},(\mathrm{id}^{\otimes 3}-\tau\otimes\mathrm{id})(\delta\otimes\mathrm{id})\delta(x)\rangle\\
		&=&\langle a^{*}\otimes b^{*}\otimes c^{*},\xi^{2}(\mathrm{id}^{\otimes 3}-\tau\otimes\mathrm{id})(\delta\otimes\mathrm{id})\delta(x)\rangle,\\
		\langle [[c^{*},a^{*}]_{A^*}, b^{*}]_{A^*}-[[a^{*},c^{*}]_{A^*},b^{*}]_{A^*},x\rangle&=&
		\langle c^{*}\otimes a^{*}\otimes b^{*},(\mathrm{id}^{\otimes 3}-\tau\otimes\mathrm{id})(\delta\otimes\mathrm{id})\delta(x)\rangle\\
		&=&\langle a^{*}\otimes b^{*}\otimes c^{*},\xi (\mathrm{id}^{\otimes 3}-\tau\otimes\mathrm{id})(\delta\otimes\mathrm{id})\delta(x)\rangle.
	\end{eqnarray*}
	
	Hence, \eqref{eq:0-Alia} holds for $(A^{*},[\cdot,\cdot]_{A^*})$
	if and only if \eqref{eq:defi:coalgebra} holds.
\end{proof}

\begin{defi}\label{bialgebra}
	A \textbf{{left-Alia bialgebra}} is a triple $(A,[\cdot,\cdot],\delta)$, such that
	$(A,[\cdot,\cdot])$ is  a left-Alia algebra, $(A,\delta)$ is a left-Alia coalgebra and the following equation holds:

	\begin{equation}\label{eq:bialg}
		(\tau-\mathrm{id}^{2})\big( \delta([x,y]-[y,x])+(\mathcal{R}_{[\cdot,\cdot]}(x)\otimes\mathrm{id})\delta(y)-(\mathcal{R}_{[\cdot,\cdot]}(y)\otimes\mathrm{id})\delta(x) \big)=0,\;\forall x,y\in A.
	\end{equation}
\end{defi}

\begin{thm}\label{thm:2}
	Let $(A,[\cdot,\cdot]_A )$ be a left-Alia algebra. Suppose that there is a left-Alia algebra structure $(A^{*},[\cdot,\cdot]_{A^{*}} )$ on the dual space $A^{*}$, and $\delta:A\rightarrow A\otimes A$ is the linear dual of $[\cdot,\cdot]_{A^*} $.
	Then, $\big ( (A,[\cdot,\cdot]_A ), (A^{*},[\cdot,\cdot]_{A^*} ), \mathcal{L}^{*}_{[\cdot,\cdot]_A}, \mathcal{L}^{*}_{[\cdot,\cdot]_A}-\mathcal{R}^{*}_{[\cdot,\cdot]_A},
	\mathcal{L}^{*}_{[\cdot,\cdot]_{A^*}}, \mathcal{L}^{*}_{[\cdot,\cdot]_{A^*}}-\mathcal{R}^{*}_{[\cdot,\cdot]_{A^*}}
	\big)$ is a matched pair of left-Alia algebras if and only if $(A,[\cdot,\cdot]_A,\delta)$ is a left-Alia bialgebra.
\end{thm}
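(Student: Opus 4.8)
The plan is to reduce each side of the claimed equivalence to one identity and then to link the two by dualizing.

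First I would make the matched-pair condition explicit. By Proposition~\ref{pro:dual rep}, applied to the adjoint representation of $(A,[\cdot,\cdot]_A)$, the triple $(\mathcal{L}^{*}_{[\cdot,\cdot]_A},\ \mathcal{L}^{*}_{[\cdot,\cdot]_A}-\mathcal{R}^{*}_{[\cdot,\cdot]_A},\ A^{*})$ is a representation of $(A,[\cdot,\cdot]_A)$; applying the same proposition to $(A^{*},[\cdot,\cdot]_{A^{*}})$, the triple $(\mathcal{L}^{*}_{[\cdot,\cdot]_{A^{*}}},\ \mathcal{L}^{*}_{[\cdot,\cdot]_{A^{*}}}-\mathcal{R}^{*}_{[\cdot,\cdot]_{A^{*}}},\ A)$ is a representation of $(A^{*},[\cdot,\cdot]_{A^{*}})$. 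Hence, with $B=A^{*}$, $l_A=\mathcal{L}^{*}_{[\cdot,\cdot]_A}$, $r_A=\mathcal{L}^{*}_{[\cdot,\cdot]_A}-\mathcal{R}^{*}_{[\cdot,\cdot]_A}$, $l_B=\mathcal{L}^{*}_{[\cdot,\cdot]_{A^{*}}}$, $r_B=\mathcal{L}^{*}_{[\cdot,\cdot]_{A^{*}}}-\mathcal{R}^{*}_{[\cdot,\cdot]_{A^{*}}}$, Proposition~\ref{pro:322} says that the tuple in the statement is a matched pair of left-Alia algebras if and only if the two compatibility identities \eqref{eq:defi:matched pairs1} and \eqref{eq:defi:matched pairs2} hold for these operators. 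On the bialgebra side, since $(A^{*},[\cdot,\cdot]_{A^{*}})$ is a left-Alia algebra and $\delta$ is its linear dual, Proposition~\ref{pro:coalgebras} shows that $(A,\delta)$ is automatically a left-Alia coalgebra; as $(A,[\cdot,\cdot]_A)$ is a left-Alia algebra by hypothesis, this means $(A,[\cdot,\cdot]_A,\delta)$ is a left-Alia bialgebra precisely when \eqref{eq:bialg} holds. So the theorem is reduced to the single assertion that \eqref{eq:bialg} holds for all $x,y\in A$ if and only if both \eqref{eq:defi:matched pairs1} and \eqref{eq:defi:matched pairs2} hold.

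The core of the argument is this assertion, which I would establish by dualization. Equation \eqref{eq:bialg} is a tensor identity in $A\otimes A$ parametrised by $x,y\in A$; pairing it against an arbitrary $a^{*}\otimes b^{*}\in A^{*}\otimes A^{*}$ turns it into a scalar identity in $x,y,a^{*},b^{*}$. The dictionary is: $\langle\delta(z),a^{*}\otimes b^{*}\rangle=\langle z,[a^{*},b^{*}]_{A^{*}}\rangle$ by definition of $\delta$; the term $\langle(\mathcal{R}_{[\cdot,\cdot]_A}(w)\otimes\mathrm{id})\delta(z),a^{*}\otimes b^{*}\rangle$ rewrites, using the duality convention $\langle l^{*}(x)u^{*},v\rangle=-\langle u^{*},l(x)v\rangle$ and $\mathcal{R}^{*}_{[\cdot,\cdot]_A}=\mathcal{L}^{*}_{[\cdot,\cdot]_A}-r_A$, as (a sign times) the pairing of $z$ with a bracket in $A^{*}$ involving $\mathcal{R}^{*}_{[\cdot,\cdot]_A}(w)a^{*}$; and the factor $\tau-\mathrm{id}^{\otimes 2}$ produces the swap $a^{*}\leftrightarrow b^{*}$ with a sign. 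After substituting and regrouping, the resulting six-term scalar identity should be exactly the pairing of \eqref{eq:defi:matched pairs1} against the fourth slot $b^{*}$: the operators $l_B$ and $r_B$ occurring in \eqref{eq:defi:matched pairs1} are, by the definition of $\delta$, precisely the contractions of $\delta$ that account for the coproduct terms of \eqref{eq:bialg}, while the $[\cdot,\cdot]_A$-, $\mathcal{L}^{*}_{[\cdot,\cdot]_A}$- and $\mathcal{R}^{*}_{[\cdot,\cdot]_A}$-terms of \eqref{eq:defi:matched pairs1} account for the $[\cdot,\cdot]$- and $\mathcal{R}_{[\cdot,\cdot]}$-terms of \eqref{eq:bialg}. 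Running the parallel computation with the roles of $A$ and $A^{*}$ interchanged — equivalently, observing that \eqref{eq:bialg} is self-dual under exchanging $(A,[\cdot,\cdot]_A,\delta)$ with $(A^{*},[\cdot,\cdot]_{A^{*}},\delta')$, where $\delta'$ is the linear dual of $[\cdot,\cdot]_A$ — yields \eqref{eq:defi:matched pairs2}. Thus, for these coadjoint-type operators, each of \eqref{eq:defi:matched pairs1} and \eqref{eq:defi:matched pairs2} is equivalent to \eqref{eq:bialg}, hence to the other.

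I expect the main obstacle to be bookkeeping rather than conceptual: the dualization above produces a long alternating sum, and one must keep track of the $\tau$-flip, of the minus signs coming from $\langle l^{*}(x)u^{*},v\rangle=-\langle u^{*},l(x)v\rangle$, and of the three cyclic summands implicit in the coalgebra identity, while matching the six terms on each side of \eqref{eq:defi:matched pairs1} and \eqref{eq:defi:matched pairs2} correctly. A clean way to organise this is to first rewrite each compatibility identity entirely in terms of $[\cdot,\cdot]_A$, $[\cdot,\cdot]_{A^{*}}$ and the operators $\mathcal{L},\mathcal{R}$ together with their duals, pair it with the missing slot, convert every $[\cdot,\cdot]_{A^{*}}$-term into a $\delta$-term by the definition of $\delta$, collect the outcome in $A\otimes A$, and verify that it equals the $(\tau-\mathrm{id}^{\otimes 2})$-expression in \eqref{eq:bialg}; reading the chain of equalities backwards then gives both implications. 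Together with Theorem~\ref{thm:1}, this proves the equivalence of Manin triples of left-Alia algebras and left-Alia bialgebras announced in Theorem~B.
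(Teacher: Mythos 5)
Your proposal is correct and follows essentially the same route as the paper: reduce the matched-pair condition to the two compatibility identities \eqref{eq:defi:matched pairs1}--\eqref{eq:defi:matched pairs2} via Propositions~\ref{pro:dual rep} and~\ref{pro:322}, reduce the bialgebra condition to \eqref{eq:bialg} via Proposition~\ref{pro:coalgebras}, and then show by pairing against $a^{*}\otimes b^{*}$ that each compatibility identity is equivalent to \eqref{eq:bialg}. The paper's proof is exactly this dualization computation, carried out term by term with the same sign conventions you describe.
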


\newpage
\begin{proof}
	For all $x,y\in A, a^{*},b^{*}\in A^{*}$, we have
\begin{eqnarray*}
		\langle(\mathcal{L}^{*}_{[\cdot,\cdot]_{A^*}}-\mathcal{R}^{*}_{[\cdot,\cdot]_{A^*}})(a^{*})([x,y]_{A}-[y,x]_{A}), b^{*}\rangle&=&\langle [x,y]_{A}-[y,x]_{A}, [b^{*},a^{*}]_{A^*}-[a^{*},b^{*}]_{A^*}\rangle\\
		&=&\langle (\tau-\mathrm{id}^{\otimes 2})\delta([x,y]_A-[y,x]_A), a^{*}\otimes b^{*}\rangle,\\
		\langle[\mathcal{R}^{*}_{[\cdot,\cdot]_{A^*}}(a^{*})y,  x]_A, b^{*}\rangle&=&-\langle \mathcal{R}^{*}_{[\cdot,\cdot]_{A^*}}(a^{*})y, \mathcal{R}^{*}_{[\cdot,\cdot]_A  }(x)b^{*}\rangle\\
		&=&\langle y, [\mathcal{R}^{*}_{ [\cdot,\cdot]_A  }(x)b^{*}, a^{*}]_{A^*}\rangle\\
		&=&-\langle \big( \mathcal{R}_{[\cdot,\cdot]_A }(x)\otimes\mathrm{id}\big)\delta(y) , b^{*}\otimes a^{*}\rangle\\
		&=&-\langle \tau\big( \mathcal{R}_{[\cdot,\cdot]_A  }(x)\otimes\mathrm{id} \big)\delta(y),a^{*}\otimes b^{*}\rangle,\\
		-\langle[\mathcal{R}^{*}_{[\cdot,\cdot]_{A^*}}(a^{*})y,  x]_A, b^{*}\rangle&=&\langle \tau\big( \mathcal{R}_{[\cdot,\cdot]_A }(y)\otimes\mathrm{id} \big)\delta(x),a^{*}\otimes b^{*}\rangle,\\
		-\langle\mathcal{L}^{*}_{[\cdot,\cdot]_{A^*}}\big( \mathcal{R}^{*}_{[\cdot,\cdot]_A}(y)a^{*}  \big)x,b^{*}\rangle&=&\langle x, [\mathcal{R}^{*}_{[\cdot,\cdot]_A}(y)a^{*}, b^{*}]_{A^*}\rangle\\
		&=&-\langle \big( \mathcal{R}_{[\cdot,\cdot]_A  }(y)\otimes\mathrm{id} \big)\delta(x), a^{*}\otimes b^{*}\rangle,\\
		\langle\mathcal{L}^{*}_{[\cdot,\cdot]_{A^*}}\big( \mathcal{R}^{*}_{[\cdot,\cdot]_A}(x)a^{*})y \big),b^{*}\rangle
		&=& \langle \big( \mathcal{R}_{[\cdot,\cdot]_A }(x)\otimes\mathrm{id} \big)\delta(y), a^{*}\otimes b^{*}\rangle.
	\end{eqnarray*}

	Thus, \eqref{eq:bialg} holds if and only if \eqref{eq:defi:matched pairs1} holds for $l_{A}=\mathcal{L}^{*}_{[\cdot,\cdot]_A},\; r_{A}=\mathcal{L}^{*}_{[\cdot,\cdot]_A}-\mathcal{R}^{*}_{[\cdot,\cdot]_A}, \linebreak  \;l_{B}=\mathcal{L}^{*}_{[\cdot,\cdot]_{A^*}},\; r_{B}=\mathcal{L}^{*}_{[\cdot,\cdot]_{A^*}}-\mathcal{R}^{*}_{[\cdot,\cdot]_{A^*}}$.
	Similarly, \eqref{eq:bialg} holds if and only if \eqref{eq:defi:matched pairs2} holds for $l_{A}=\mathcal{L}^{*}_{[\cdot,\cdot]_A},\; r_{A}=\mathcal{L}^{*}_{[\cdot,\cdot]_A}-\mathcal{R}^{*}_{[\cdot,\cdot]_A},\;l_{B}=\mathcal{L}^{*}_{[\cdot,\cdot]_{A^*}},\; r_{B}=\mathcal{L}^{*}_{[\cdot,\cdot]_{A^*}}-\mathcal{R}^{*}_{[\cdot,\cdot]_{A^*}}$.
	Hence, the conclusion follows.
\end{proof}

Summarizing Theorems~\ref{thm:1} and~\ref{thm:2} , we have the following corollary:

\begin{cor}\label{cor:4.8}
	Let $(A,[\cdot,\cdot]_A )$ be a left-Alia algebra. Suppose that there is a left-Alia algebra structure $(A^{*},[\cdot,\cdot]_{A^{*}} )$ on the dual space $A^{*}$, and $\delta:A\rightarrow A\otimes A$ is the linear dual of $[\cdot,\cdot]_{A^*}$. Then, the following conditions are equivalent:
	\begin{enumerate}
		\item[(a)] There is a Manin triple of left-Alia algebras $\big( (d=A\oplus A^{*},[\cdot,\cdot]_{d},\mathcal{B}_{d}), A, A^{*} \big)$.
		\item[(b)] $\big ( (A,[\cdot,\cdot]_{A} ), (A^{*},[\cdot,\cdot]_{A^*} ), \mathcal{L}^{*}_{[\cdot,\cdot]_A}, \mathcal{L}^{*}_{[\cdot,\cdot]_A}-\mathcal{R}^{*}_{[\cdot,\cdot]_A},
		\mathcal{L}^{*}_{[\cdot,\cdot]_{A^*}}, \mathcal{L}^{*}_{[\cdot,\cdot]_{A^*}}-\mathcal{R}^{*}_{[\cdot,\cdot]_{A^*}}
		\big)$ is a matched pair of left-Alia algebras.
		\item[(c)] $(A,[\cdot,\cdot]_A,\delta)$ is a left-Alia bialgebra.
	\end{enumerate}
\end{cor}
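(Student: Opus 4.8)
The plan is to deduce the corollary directly from the two theorems just established, since conditions (a), (b), (c) are linked pairwise by Theorems~\ref{thm:1} and~\ref{thm:2}. Under the standing hypotheses of the corollary --- namely that $(A,[\cdot,\cdot]_A)$ and $(A^{*},[\cdot,\cdot]_{A^{*}})$ are both left-Alia algebras and that $\delta:A\to A\otimes A$ is the linear dual of $[\cdot,\cdot]_{A^{*}}$ --- the pair of left-Alia structures on $A$ and $A^{*}$ that serves as input to Theorem~\ref{thm:1} is precisely what is given, so Theorem~\ref{thm:1} yields the equivalence (a) $\Leftrightarrow$ (b). Likewise, the hypotheses of Theorem~\ref{thm:2} coincide verbatim with those of the corollary, so Theorem~\ref{thm:2} yields (b) $\Leftrightarrow$ (c). Chaining the two equivalences gives (a) $\Leftrightarrow$ (b) $\Leftrightarrow$ (c), which is the assertion.

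One small point worth spelling out when writing this up is that condition (c) asks for a left-Alia bialgebra, which by Definition~\ref{bialgebra} requires in particular that $(A,\delta)$ be a left-Alia coalgebra. But by Proposition~\ref{pro:coalgebras}, $(A,\delta)$ is a left-Alia coalgebra exactly when its linear dual $(A^{*},[\cdot,\cdot]_{A^{*}})$ is a left-Alia algebra, which is part of the hypothesis; hence that clause of (c) is automatic, and the genuine content of (c) is the compatibility identity~\eqref{eq:bialg}, which is exactly what Theorem~\ref{thm:2} matches against equations~\eqref{eq:defi:matched pairs1} and~\eqref{eq:defi:matched pairs2}. This observation is only needed to confirm that the three statements really are about the same package of data.

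I do not expect any real obstacle: the corollary is a formal consequence of the two theorems, and the substantive work --- recovering the dual-representation matched pair from the Manin-triple data via the nondegeneracy of $\mathcal{B}_{d}$, and translating the matched-pair equations into the bialgebra identity~\eqref{eq:bialg} by dualization --- has already been carried out in the proofs of Theorems~\ref{thm:1} and~\ref{thm:2}. The only thing to verify in the write-up is that the input left-Alia structures on $A$ and $A^{*}$ and the co-multiplication $\delta$ are consistently the same objects across all three conditions, which holds by construction.
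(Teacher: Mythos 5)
Your proposal is correct and matches the paper exactly: the corollary is obtained by chaining Theorem~\ref{thm:1} (giving (a) $\Leftrightarrow$ (b)) with Theorem~\ref{thm:2} (giving (b) $\Leftrightarrow$ (c)), which is precisely how the paper presents it. Your extra remark that the coalgebra clause of (c) is automatic by Proposition~\ref{pro:coalgebras} under the standing hypothesis is a sensible clarification but not a departure from the paper's argument.
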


\begin{ex}
	Let $(A,[\cdot,\cdot]_{A})$ be the three-dimensional left-Alia algebra given in Example~\ref{ex:3-left}.
	
	Then, there is a left-Alia bialgebra $(A,[\cdot,\cdot]_{A},\delta)$ with a non-zero co-multiplication $\delta$ on $A$, given by
	\begin{equation}\label{eq:co}
		\delta(e_{1})=e_{1}\otimes e_{1}.
	\end{equation}
	
	Then, by Corollary~\ref{cor:4.8}, there is a Manin triple $\big( (A\oplus A^{*},[\cdot,\cdot],\mathcal{B}_{d}), A, A^{*}\big)$. 
	Here, the multiplication $[\cdot,\cdot]_{A^{*}}$ on $A^{*}$ is given through $\delta$ by \eqref{eq:co}, that is,
	\begin{equation*}
		[e^{*}_{1},e^{*}_{1}]_{A^{*}}=e^{*}_{1},
	\end{equation*}
	and the multiplication $[\cdot,\cdot]$ on $A\oplus A^{*}$ is given by \eqref{eq:mp dual rep}.
	Moreover, $\big ( (A,[\cdot,\cdot]_{A} ), (A^{*},[\cdot,\cdot]_{A^*} ), \mathcal{L}^{*}_{[\cdot,\cdot]_A}$,
	$ \mathcal{L}^{*}_{[\cdot,\cdot]_A}-\mathcal{R}^{*}_{[\cdot,\cdot]_A},
	\mathcal{L}^{*}_{[\cdot,\cdot]_{A^*}}, \mathcal{L}^{*}_{[\cdot,\cdot]_{A^*}}-\mathcal{R}^{*}_{[\cdot,\cdot]_{A^*}}
	\big)$ is a matched pair of left-Alia algebras.
\end{ex}

\noindent{\bf Acknowledgements.} The fourth author would like to thank Prof.Lam Siu-Por, Dr.Li Yu and Dr.Wang Chuijia for helpful discussion on invariant theory. The fourth author acknowledges support from the NSF China (12101328) and NSF China (12371039).

\end{document}